\numberwithin{equation}{section}
\theoremstyle{plain}
\newtheorem{corollary}[equation]{Corollary}
\newtheorem{theorem}[equation]{Theorem}
\newtheorem{conjecture}[equation]{Conjecture}
\newtheorem{lemma}[equation]{Lemma}
\newtheorem{proposition}[equation]{Proposition}
\newtheorem{problem}[equation]{Problem}
\theoremstyle{definition}
\newtheorem{remark}[equation]{Remark}
\newtheorem{example}[equation]{Example}
\newtheorem{mysubsection}[equation]{}
\begin{document}

%\articletype{ARTICLE TEMPLATE}% Specify the article type or omit as appropriate

%[Alhazen's problem and conicoid surface mirror reflection]
\title{The Ptolemy-Alhazen problem and quadric 
           surface mirror reflection}

\title[Alhazen's problem and conicoid surface mirror reflection]
        {\bf The Ptolemy-Alhazen problem and quadric 
           surface mirror reflection}
\author[M. Fujimura]{Masayo Fujimura}
\address{Department of Mathematics, National Defense Academy of Japan, Japan}
\email{masayo@nda.ac.jp}

\author[M. Mocanu]{Marcelina Mocanu}
\address{Department of Mathematics and Informatics, Vasile Alecsandri
         University of Bac\u{a}u, Romania}
\email{mmocanu@ub.ro}

\author[M. Vuorinen]{Matti Vuorinen}
\address{Department of Mathematics and Statistics, University of Turku,
         Turku, Finland}
\email{vuorinen@utu.fi}

\keywords{
  Alhazen's problem; Triangular ratio metric; Catacaustic curve; Conic}
\subjclass[2010]{   30C20, 30C15, 51M99}

\date{March 22, 2022: fmv20220322c.tex}

\begin{abstract}
  We discuss the problem of the reflection of light on 
  spherical and quadric surface mirrors. 
  In the case of spherical mirrors, this problem is known as 
  {the Alhazen} problem.
%%%%%%%%%%%%%% Comment 0) %%%%%%%%%%%%%%%%%%
 {
  For the spherical mirror problem,
  we focus on the reflection property of an ellipse,
  and show that the catacaustic curve of the unit circle follows naturally
  from the equation obtained from the reflection property of an ellipse.
  }
  Moreover, we provide an algebraic equation that solves
  Alhazen's problem for quadric surface mirrors.
\end{abstract}

\maketitle

\section{Introduction}
Alhazen's problem {\cite[p.1010]{gbl}}
is the problem that asks the following: 
{\it Given a light source and a spherical mirror, 
find the point on the mirror where the light will be reflected 
to the eye of an observer}.
%%%%%%%%%%%%%%%%%%%%%%%%%%%%%%%%%%%%%%% Comment 1) %%%%%%%%%%%%%
{This problem is first formulated by Ptolemy in 150 AD
and is therefore also called the Ptolemy-Alhazen problem.}
We call {the} reflection point
{of this problem} the {\it PA-point}.

This problem is equivalent to solving the following problem for a disk.
For given two points 
$ z_1,\ z_2 \in\mathbb{D}=\{z\in\mathbb{C}\,:\, \vert z\vert <1\}$,
find $ u \in\partial\mathbb{D} $ such that
\[
    \vert \angle(z_1,u,0)\vert =\vert \angle(0,u,z_2)\vert .
\]

Many mathematicians and researchers of geometrical optics 
have investigated this problem. For a short
history of this topic, see \cite{gbl}. 
Some of the recent studies of this topic,
from the point of astrophysics and signal transmission 
are \cite{mavreas, mibama}.
The bibliographies of \cite{fhmv,fmv} include several pointers to
the literature.

As above, this problem has a long history, but it has finally been 
solved algebraically only recently.
Elkin \cite{elkin} found in 1965 an equation of degree 4 
solving this problem.

His strategy was to find the PA-point as the intersection of 
the unit circle and a circle centered at $ z_1 $.
In this paper, we will study algebraic equations that solve 
reflection problems on spherical and quadric surfaces.

In \cite{fhmv}, we studied Alhazen's problem and its relation to 
the triangular ratio metric $ s_G $ of a given domain $ G \subset \mathbb{C} $
defined as follows 
(see, for instance {\cite{hkv-book}},
\cite{rainio}, \cite{chkv}, \cite{hvz} and {\cite{m}} 
for other studies on the triangular ratio metric)
\begin{equation}\label{eq:sG}
  s_G(z_1,z_2)=\sup_{z\in\partial G}
        \frac{\vert z_1-z_2\vert }{\vert z_1-z\vert +\vert z-z_2\vert },
         \quad  z_1,z_2\in G .
\end{equation}
In \cite{fhmv}, we discussed an equation that uses the reflection property
of an ellipse. 

Solutions to Alhazen's problems for quadric surfaces have also
found applications to many other fields besides mathematics.
Agrawal, Taguchi, and Ramalingam 
{\cite{atr,atr2}}  %%% Comment 2) %%%%%%%%
studied Alhazen's problem for application to a camera 
with quadric shaped mirrors.
They constructed an equation with six roots
that include the PA-points in \cite{atr2}.
In \cite{mibama}, Miller, Barnes, and MacKenzie studied
an equation solving Alhazen's problem and
proposed a fast method for choosing the correct point from the roots.
In addition, they mentioned that if their method could be extended 
to the case of elliptical surfaces, it could be useful for GPS 
communication, and could also be applied to computer rendering.
This motivated us to construct an algebraic equation that
{yields} the PA-point
for quadric surfaces.

\smallskip

This paper is organized as follows.
In Section \ref{sect:disc}, we discuss the relation between the equation
using the reflection property of an ellipse and 
the catacaustic curve of the circle.
We can also use the properties of the circle of Apollonius 
to construct an equation that solves Alhazen's problem. 
This equation is studied in Section \ref{sect:apollo}.
In Section \ref{sect:conic}, we discuss Alhazen's problem for 
quadric surfaces and provide 
an equation $ F_4=0 $ that gives the PA-points in Theorem \ref{lem:tan}.
This equation is different from the one formulated by Agrawal et al. 
in \cite[Section 2]{atr2}.
In fact, the algebraic equation $ F_4=0 $ is obtained by using 
the reflective property of an ellipse and a method based on 
algebraic geometry.
Moreover, using Theorem \ref{lem:tan}, we give the calculation method of the
triangular ratio metric on conic domains in Theorem \ref{thm:6ji}.
The application to the calculation of the triangular ratio metric is 
also discussed in Section \ref{sect:smetric}.

%%%%%%%%%%%%%%%%%%
In this paper, several symbolic computation systems are used effectively.
For graphics, we use GeoGebra\footnote{{\tt https://www.geogebra.org/}},
dynamic mathematics software,
to create Figures \ref{fig:apollo}, \ref{pic:1} and \ref{pic:3},
whereas
Figures \ref{fig:caustic} and \ref{fig:contour1} are drawn 
using Mathematica\footnote{{\tt https://www.wolfram.com/}}
and Risa/Asir\footnote{{\tt http://www.math.kobe-u.ac.jp/Asir/asir.html}
(Kobe Distribution)}, symbolic computation systems,
respectively.
We also use the techniques of computer algebra 
such as resultant to obtain some target equations.
In particular, we use Risa/Asir to obtain the result 
of Theorem \ref{lem:tan}, 
which is difficult to calculate manually.

%%%%%%%%%%%%%%%%%%%%%%%%
%%%%%%%%%%%%%%%%%%%%%%%%

\section{Alhazen's problem on a disk --Solution using ellipses}
\label{sect:disc}
In this section,  we consider Alhazen's problem on the unit disk.

For $ z_1,\ z_2\in\mathbb{D} $, 
let $ z $ be the PA-point on $ \partial\mathbb{D} $ with respect to
these two points.

%\subsection{Solution using ellipses}
\begin{mysubsection}{\bf Solution using ellipses}

Using the reflective property of an ellipse,
the PA-point can be found as the points of tangency of 
an ellipse with foci $ z_1,\, z_2 $ and the unit circle
(see the left figure in Figure \ref{fig:apollo}).
\end{mysubsection}

\begin{lemma}[{See, for example \cite[Theorem 1.1]{fhmv}}]
\label{lem:pa-eq}
  For $ z_1,z_2\in\mathbb{D} $, the PA-point $ z $ is given as a solution of 
  the equation
  \begin{equation}\label{eq:reflect}
     \overline{z_1}\overline{z_2} z^4-(\overline{z_1}+\overline{z_2})z^3+(z_1+z_2)z-z_1z_2=0.
  \end{equation}
\end{lemma}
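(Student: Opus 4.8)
The plan is to translate the defining angle condition for the PA-point directly into a single algebraic identity in $z$ by writing the law of reflection in complex form and then eliminating every conjugate of $z$ via the relation $\bar z = 1/z$, which is available because $z\in\partial\mathbb{D}$. First I would record the local geometry at the reflection point. Since $\mathbb{D}$ is centered at the origin and $|z|=1$, the outward unit normal to the mirror at $z$ is $z$ itself, so the ray $z\to 0$ lies along the normal. The incoming ray travels in the direction $z-z_1$ and the outgoing ray in the direction $z_2-z$. Writing the reflection of a direction $d$ in a mirror with unit normal $n$ as $-n^{2}\overline{d}$, the equality $|\angle(z_1,z,0)|=|\angle(0,z,z_2)|$ (the normal bisecting the two focal directions) is equivalent to the statement that $z_2-z$ is a real multiple of $-z^{2}\,\overline{(z-z_1)}$, i.e.
\[
  \frac{z_2 - z}{-z^{2}\,\overline{(z - z_1)}} \in \mathbb{R}.
\]

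Next I would impose that this ratio equal its own conjugate and then clear denominators. The decisive simplification is $\bar z = 1/z$, which turns $\overline{(z-z_1)}=1/z-\bar z_1=(1-z\bar z_1)/z$ and removes every remaining occurrence of $\bar z$. After substituting and cross-multiplying (legitimate since $z\neq 0$), the condition collapses to the polynomial identity
\[
  (z_2 - z)(z - z_1) = z^{2}\,(1 - z\bar z_1)(z\bar z_2 - 1).
\]
Expanding both sides, the left-hand side equals $-z^{2}+(z_1+z_2)z-z_1z_2$ and the right-hand side equals $-z^{2}+(\bar z_1+\bar z_2)z^{3}-\bar z_1\bar z_2 z^{4}$; the degree-two terms cancel, and collecting the rest yields exactly (\ref{eq:reflect}).

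Geometrically, the displayed proportionality is precisely the reflective property of an ellipse: it says the normal $z$ bisects the angle between the focal directions $z\to z_1$ and $z\to z_2$, so that the confocal ellipse $\{w:|w-z_1|+|w-z_2|=\mathrm{const}\}$ through $z$ is tangent to $\partial\mathbb{D}$ there, which is how the PA-point was described before the statement. One could instead begin from the tangency condition, setting the gradient of $|w-z_1|+|w-z_2|$ parallel to the gradient of $|w|$; this recovers the same equation but forces one to carry the square roots $|z-z_1|$ and $|z-z_2|$ and to rationalize them, which I expect to be appreciably messier than the route above.

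I expect the main obstacle to be bookkeeping rather than anything conceptual. Two points need care: first, pinning down the correct sign and branch in the reflection law, so that real proportionality (with the physically correct positive factor) genuinely encodes $|\angle(z_1,z,0)|=|\angle(0,z,z_2)|$ and not some reflected-off family of rays; and second, checking that multiplying through by the $z$-denominators neither discards a genuine solution nor introduces a spurious one. Once the substitution $\bar z=1/z$ is in place, the remaining expansion is routine and the cancellation of the $z^{2}$ terms is the only step worth watching.
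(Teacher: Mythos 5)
Your derivation is correct, and it is essentially the same argument as the one underlying the paper's statement: the paper itself gives no proof (it cites \cite[Theorem 1.1]{fhmv} and the ellipse-tangency picture described just before the lemma), and that proof likewise encodes the angle-bisection/reflection law at $z\in\partial\mathbb{D}$ as the reality of $(z-z_1)(z-z_2)/z^2$, eliminates $\overline{z}$ via $\overline{z}=1/z$, and clears denominators to obtain \eqref{eq:reflect}, which is exactly your computation up to rearrangement of factors. The one caveat, which you yourself flag, is that reality (rather than positive-reality) of the ratio enlarges the solution set beyond true reflection points; since the lemma only asserts that the PA-point is among the roots of \eqref{eq:reflect}, this is harmless.
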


This lemma is also valid for the external reflection, i.e.
the equation \eqref{eq:reflect} holds for 
$ z_1,\, z_2\in\mathbb{C}\setminus\overline{\mathbb{D}} $
if the line segment $ [z_1,z_2] $ has no intersection with 
$ \partial{\mathbb{D}} $.
Note that  if $ [z_1,z_2]\cap\partial\mathbb{D}\neq \emptyset $,
the light is blocked by the boundary (mirror) and never reaches the observer.
Moreover, in this case
$ \sup_{z\in\partial\mathbb{D}}\{\vert z_1-z_2\vert 
    /(\vert z_1-z\vert +\vert z-z_2\vert )\}= 1 $.

%\subsection{Number of roots and catacaustic of a circle}
\begin{mysubsection}{\bf Number of roots and catacaustic of a circle}

A root $u\in \partial \mathbb{D}$ of the equation 
\[
   P_{z_{1},z_{2}}(u)=\overline{z_{1}z_{2}}u^{4}
         -\left( \overline{z_{1}}+\overline{z_{2}}\right)u^{3}
         +\left( z_{1}+z_{2}\right) u-z_{1}z_{2}=0
\]
is a PA-point if and only if 
\begin{equation*}
  \mbox{Re}\left( \overline{z_{1}z_{2}}u^{2}-\left( \overline{z_{1}}
  +\overline{z_{2}}\right) u\right) +1>0,
\end{equation*}
as follows from \cite[Lemma 3.1]{fhmv}. If $z_{1},z_{2}\in \mathbb{%
D\setminus }\left\{ 0\right\} $, then the above inequality holds whenever $%
u\in \partial \mathbb{D}$, therefore all the roots of $P_{z_1,z_2}=0$
that lie on the unit circle, called \textit{unimodular} roots, are PA-points.

If $z_{1}=0$ and $z_{2}=\left\vert z_{2}\right\vert e^{i\alpha }\neq 0$,
then the roots of \eqref{eq:reflect} are $0$ and $\pm e^{i\frac{\alpha }{2}}$. In
the following we will assume $z_{1},z_{2}\in \mathbb{C\setminus }\left\{
0\right\} $, therefore \eqref{eq:reflect} is a quartic equation.

The equation $P_{z_{1},z_{2}}=0 $ has always at least two distinct unimodular
roots \cite[Lemma 2.4]{fhmv} and both cannot have multiplicity $2$ 
\cite[Lemma 4.1]{fhmv}. 
Moreover, $P_{z_{1},z_{2}}=0$ has four simple unimodular
roots if $z_{1},z_{2}\in \mathbb{C\setminus }\mathbb{D}$ . 
If $P_{z_1,z_2}=0$ has a
triple root $a$ and a simple root $b$, then $\left\vert a\right\vert =1$ and 
$b=-a$ \cite[Lemma 4.3]{fhmv}. We characterize in terms of $z_{1}$ and $%
z_{2} $ all the possible cases for the number of unimodular roots of 
$P_{z_{1},z_{2}}=0$ and their multiplicities, both algebraically and
geometrically. Various approaches to particular cases of
this problem are scattered
through the literature \cite{W}, \cite{DG}, \cite{fhmv}.

We will denote by $D(P)$ the discriminant of the complex 
polynomial $P$.
\end{mysubsection}
\begin{proposition}\label{rootnum}
  Let $z_{1},z_{2}\in \mathbb{C\setminus }\left\{ 0\right\} $
  and
  \[
    P_{z_{1},z_{2}}\left( u\right) =\overline{z_{1}z_{2}}u^{4}-\left( 
     \overline{z_{1}}+\overline{z_{2}}\right) u^{3}+\left( z_{1}+z_{2}\right)
     u-z_{1}z_{2}.
  \]
  Then
  \begin{enumerate}
    \item  \label{enu:1}
         $P_{z_{1},z_{2}}=0$ has four simple unimodular roots if and only if $%
         D\left( P_{z_{1},z_{2}}\right) <0$;
    \item  \label{enu:2}
         $P_{z_{1},z_{2}}=0$ has two simple unimodular roots and two 
         distinct roots
         off the unit circle if and only if $D\left( P_{z_{1},z_{2}}\right) >0$;
    \item \label{enu:3}
       $P_{z_{1},z_{2}}=0$ has at least one multiple root if and only if $D\left(
       P_{z_{1},z_{2}}\right) =0$.
  \end{enumerate}
  Moreover, in the case \eqref{enu:3} all the roots of 
     $P_{z_{1},z_{2}}=0$ are unimodular and $ P_{z_{1},z_{2}}=0$ 
  has either a double root and two simple roots, or a triple
  root $ v $ and $ -v $ as a simple root.
\end{proposition}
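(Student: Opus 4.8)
Throughout write $P=P_{z_{1},z_{2}}$ and $D=D(P)$. The plan is to exploit the self-inversive symmetry of $P$. First I would record the polynomial identity
\[
   u^{4}\,\overline{P\!\left(1/\overline{u}\,\right)}=-\,P(u),
\]
obtained by conjugating $P$, replacing $u$ by $1/\overline{u}$, and clearing denominators; equivalently, reversing the order of the coefficients of $P$ and conjugating them reproduces $-P$. Two consequences follow at once. First, the root set is invariant under the inversion $u\mapsto 1/\overline{u}$, so every root with $|u|\neq 1$ is matched with a distinct partner $1/\overline{u}$ on the opposite side of $\partial\mathbb{D}$; hence the number of non-unimodular roots is even. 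Combined with the cited fact that $P=0$ always has at least two distinct unimodular roots \cite[Lemma 2.4]{fhmv} and $\deg P=4$, the only possibilities when all roots are simple are: (A) four distinct unimodular roots, or (B) two distinct unimodular roots together with one inversion pair $\{w,1/\overline{w}\}$, $|w|\neq 1$. The value $4$ off the circle is excluded since it would leave no unimodular root.

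Next I would show $D\in\mathbb{R}$. Writing $\tilde{P}$ for the polynomial with complex-conjugated coefficients, one has $D(\tilde{P})=\overline{D}$. The identity above says that the reciprocal polynomial $u^{4}\tilde{P}(1/u)$ equals $-P$; since both reversing the coefficients (the constant term $-z_{1}z_{2}\neq 0$) and multiplying by $-1$ leave the quartic discriminant unchanged, we obtain $\overline{D}=D(\tilde{P})=D(-P)=D$, so $D$ is real. This makes the trichotomy on the sign of $D$ meaningful, and part \eqref{enu:3} is then the standard fact that $D=0$ precisely when $P$ has a repeated root (here the leading coefficient $\overline{z_{1}z_{2}}\neq 0$).

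The heart of the argument is to compute the sign of $D=\bigl(\overline{z_{1}z_{2}}\bigr)^{6}\prod_{j<k}(r_{j}-r_{k})^{2}$ in cases (A) and (B). Writing $z_{1}z_{2}=\rho e^{i\phi}$, reality of $D$ forces the argument of $\prod_{j<k}(r_{j}-r_{k})^{2}$ to be $e^{6i\phi}$, so after stripping this common phase the sign of $D$ is read off from real magnitudes and from the signs contributed by the factored differences. For each pair of unimodular roots I would use $e^{i\theta_{j}}-e^{i\theta_{k}}=2i\,e^{i(\theta_{j}+\theta_{k})/2}\sin\frac{\theta_{j}-\theta_{k}}{2}$, which supplies the negative factor $(2i)^{2}=-4$; for the cross terms and the pair factor in case (B) I would use $(e^{i\theta}-w)(e^{i\theta}-1/\overline{w})=-e^{i\theta}|e^{i\theta}-w|^{2}/\overline{w}$ and $(w-1/\overline{w})^{2}=(|w|^{2}-1)^{2}/(\overline{w})^{2}$, which contribute only positive magnitudes and pure phases, plus the global phase $e^{3i\pi}=-1$ produced in aligning the accumulated argument with $e^{6i\phi}$. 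The upshot is that the residual sign equals $(-1)^{1+\binom{m}{2}}$, where $m\in\{2,4\}$ is the number of unimodular roots: this is negative for $m=4$ (six unimodular pairs) and positive for $m=2$ (one unimodular pair). Hence $D<0$ in case (A) and $D>0$ in case (B), which together with the first paragraph yields the equivalences \eqref{enu:1} and \eqref{enu:2}.

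Finally, in the degenerate case $D=0$ I would argue that every multiple root is unimodular: by the inversion symmetry a multiple root $r$ with $|r|\neq 1$ would force $1/\overline{r}$ to be a multiple root as well, exhausting all four roots off $\partial\mathbb{D}$ and leaving none unimodular, contradicting \cite[Lemma 2.4]{fhmv}. Thus all repeated roots lie on $\partial\mathbb{D}$, and since off-circle roots still pair up, all four roots are unimodular. The admissible patterns are then pinned down by the cited results: a quadruple root gives only one distinct unimodular root (contradiction), and a double root cannot be accompanied by a second double root \cite[Lemma 4.1]{fhmv} nor by an off-circle inversion pair (which would again leave only one distinct unimodular root), so the two remaining roots are simple and unimodular. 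The only other possibility with a repeated root is a triple root $v$, for which \cite[Lemma 4.3]{fhmv} gives $|v|=1$ and the simple root $-v$. This proves the final assertion. The main obstacle throughout is the sign computation of the third paragraph—namely establishing that $D$ is real and isolating the parity of unimodular pairs that flips its sign between the two generic configurations; the remaining steps are bookkeeping built on the inversion symmetry and the cited lemmas.
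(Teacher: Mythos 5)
Your proof is correct, but it takes a genuinely different route from the paper's. The paper (following \cite{W}) transfers the problem to the real line via the substitution $u=(1+it)/(1-it)$: unimodular roots of $P_{z_1,z_2}$ correspond to real roots of a quartic $Q_{z_1,z_2}$ with real coefficients, the discriminants are related by $D(Q_{z_1,z_2})=-64\,D(P_{z_1,z_2})$, and the classical sign criterion for real quartics \cite{rees} finishes the argument; the price is a three-case analysis (reciprocal polynomials $P^{\ast}$, $Q^{\ast}$, and the fully real case $s_2=p_2=0$) to handle the degeneracies $P_{z_1,z_2}(-1)=0$ and $P_{z_1,z_2}(1)=0$, where the transformed polynomial drops degree. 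You instead exploit the self-inversive identity $u^{4}\overline{P(1/\overline{u})}=-P(u)$ directly: it gives inversion-invariance of the root multiset (hence the pairing of off-circle roots and your (A)/(B) dichotomy), an abstract proof that $D\in\mathbb{R}$, and then the sign of $D=\overline{p}^{\,6}\prod_{j<k}(r_j-r_k)^2$, $p=z_1z_2$, by factoring root differences. I checked your bookkeeping: the decisive ``global'' minus sign comes from $\prod_j r_j=-p/\overline{p}$, so that $\bigl(\prod_j r_j\bigr)^{3}=-e^{6i\phi}$, and the computation indeed yields $D<0$ in case (A) and $D>0$ in case (B), which with your first paragraph gives the equivalences \eqref{enu:1} and \eqref{enu:2}; your treatment of case \eqref{enu:3} via the inversion pairing and the cited lemmas is also sound. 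Your route is more uniform (no change of variable, no degenerate cases) and explains geometrically why the sign of $D$ is governed by the parity of the number of pairs of unimodular roots; the paper's route leans on classical real-quartic theory and produces as a byproduct the explicit formula \eqref{eq:DiscrP} for $D(P_{z_1,z_2})$ in terms of $s$ and $p$, which the paper reuses in Corollary \ref{diskroots} and in the catacaustic discussion, whereas your argument does not yield that formula. Both proofs rest on the same facts from \cite{fhmv} (at least two distinct unimodular roots, no two double unimodular roots, the triple-root lemma). One presentational caveat: your sentence that reality of $D$ ``forces the argument of $\prod_{j<k}(r_j-r_k)^2$ to be $e^{6i\phi}$'' is only true up to sign --- that sign is precisely what must be computed --- but the factorizations you list do resolve it correctly.
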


\begin{proof}
Denote $s=z_{1}+z_{2}=s_{1}+is_{2}$ and $p=z_{1}z_{2}=p_{1}+ip_{2}$, where $%
s_{1},s_{2}$ and $p_{1},p_{2}$ are real numbers. Then $P_{z_{1},z_{2}}(u)=%
\overline{p}u^{4}-\overline{s}u^{3}+su-p$.

As in \cite{W}, using the substitution $u=(1+it)/(1-it)$
 we see that \eqref{eq:reflect} turns into an algebraic equation with 
real coefficients. 
We have 
\begin{equation*}
  P_{z_{1},z_{2}}\left( \frac{1+it}{1-it}\right) 
    =\frac{(-2i)}{\left(1-it\right) ^{4}}Q_{z_{1},z_{2}}(t)\text{, }
      t\in \mathbb{C\setminus }\left\{-i\right\}
\end{equation*}%
where 
\begin{equation*}
  Q_{z_{1},z_{2}}\left( t\right) 
    =\left( s_{2}+p_{2}\right) t^{4}+2\left(s_{1}+2p_{1}\right) t^{3}
     -6p_{2}t^{2}+2\left( s_{1}-2p_{1}\right) t-\left(s_{2}-p_{2}\right) \text{. }
\end{equation*}

There is a one-to-one correspondence between the unimodular zeros of 
$P_{z_{1},z_{2}}=0$ different from $\left( -1\right) $ and the real roots of 
$Q_{z_{1},z_{2}}=0$, as $P_{z_{1},z_{2}}\left( e^{i\varphi }\right) =0$ if and
only if $Q_{z_{1},z_{2}}\left( \tan \frac{\varphi }{2}\right) =0$, where 
$\varphi \in \left( -\pi ,\pi \right) $.

Similarly, we consider the reciprocal polynomials $P_{z_{1},z_{2}}^{\ast }$
and $Q_{z_{1},z_{2}}^{\ast }$ of $P_{z_{1},z_{2}}$ and $Q_{z_{1},z_{2}}$,
respectively: 
\begin{align*}
  P_{z_{1},z_{2}}^{\ast }(u) 
      &= -pu^{4}+su^{3}-\overline{s}u+\overline{p}, \\
  Q_{z_{1},z_{2}}^{\ast }\left( t\right) 
      &= -\left( s_{2}-p_{2}\right)
         t^{4}+2\left( s_{1}-2p_{1}\right) t^{3}-6p_{2}t^{2}+2\left(
         s_{1}+2p_{1}\right) t+\left( s_{2}+p_{2}\right) .
\end{align*}
We have $P_{z_{1},z_{2}}^{\ast }\left( (t+i)/(t-i)\right) 
=(-2i)/( t-i)^{4}Q_{z_{1},z_{2}}^{\ast }(t)$, $t\in \mathbb{C\setminus 
}\left\{ i\right\} $. There is a one-to-one correspondence between the
unimodular zeros of $P_{z_{1},z_{2}}^{\ast }=0$ different from $1$ and the
real roots of $Q_{z_{1},z_{2}}^{\ast }=0$, 
as $P_{z_{1},z_{2}}\left(e^{i\varphi }\right) =0$ if and only if 
$Q_{z_{1},z_{2}}^{\ast }\left( \cot \frac{\varphi }{2}\right) =0$, 
where $\varphi \in \left( 0,2\pi \right) $.

Note that $Q_{z_{1},z_{2}}^{\ast }$ is a quartic polynomial if and only if $%
s_{2}-p_{2}\neq 0$, which is equivalent to $P_{z_{1},z_{2}}(1)\neq 0$.

We discuss the number of real roots of $Q_{z_{1},z_{2}}=0$ and 
$ Q_{z_{1},z_{2}}^{\ast }=0$. 
%It is known that the discriminant of a quartic
%polynomial with real coefficients is positive if and only if the number of
%non-real roots of the polynomial is a multiple of $4$.
It is known that the discriminant of a quartic
equation with real coefficients is positive if and only if 
the equation has four simple roots that are either all real 
or two pairs of complex conjugates (see for example \cite{rees}).

\textbf{Case 1.} Assume that $s_{2}+p_{2}\neq 0$. Then $P_{z_{1},z_{2}}%
\left( -1\right) \neq 0$ and $Q_{z_{1},z_{2}}$ is a quartic polynomial. The
discriminant of the polynomial $P_{z_{1},z_{2}}$ is the real number 
\begin{equation*}
  D(P_{z_{1},z_{2}})=\frac{1}{27}\left( 4I^{3}-J^{2}\right) \text{, }
\end{equation*}%
where $I=-12\left\vert p\right\vert^{2}+3\left\vert s\right\vert^{2}$ 
and $ J=-27\left( s^{2}\overline{p}-\overline{s}^{2}p\right) $, 
hence 
\begin{align} \label{eq:DiscrP}
  D\left( P_{z_{1},z_{2}}\right)  
    &= 4\big( \left\vert s\right\vert^{2}
          -4\left\vert p\right\vert^{2}\big) ^{3}-27\left( s^{2}\overline{p}
          -\overline{s}^{2}p\right) ^{2} \\ \notag
    &= 4\left\vert s\right\vert^{6}+6\left\vert s\right\vert^{4}\left\vert
        p\right\vert^{2}+192\left\vert s\right\vert^{2}\left\vert  
        p\right\vert^{4}-256\left\vert p\right\vert^{6}
        -54\mbox{Re}\left( \overline{s}^{4}p^{2}\right) .
\end{align}
The above formula appears in \cite{W}. Using the properties of the
discriminant of a binary form as a projective invariant
\cite[Definition 2.2]{J} it follows that 
$D(Q_{z_{1},z_{2}})=-64D(P_{z_{1},z_{2}})$,
see also \cite{W}.

Since $P_{z_{1},z_{2}}=0$ has at least two distinct unimodular roots, 
$Q_{z_{1},z_{2}}=0$ has at least two distinct real roots. 
The discriminant $D\left( Q_{z_{1},z_{2}}\right) =0$ if and only 
$Q_{z_{1},z_{2}}=0$ has multiple roots. 
$D\left( Q_{z_{1},z_{2}}\right) >0$ if and only if all the
roots of $Q_{z_{1},z_{2}}=0$ are real, 
since the equation $Q_{z_1,z_2}=0$ can have at most 2 non-real roots.
Then $D\left( Q_{z_{1},z_{2}}\right) <0$ if and
only if $Q_{z_{1},z_{2}}=0$ has two distinct real roots and two conjugated
non-real roots.
For the relation between the roots of a quartic equation 
and the discriminant, see also \cite{rees}.
We obtain the following:
\begin{enumerate}
  \item[(i)] 
        $P_{z_{1},z_{2}}=0$ has four simple unimodular roots 
        if and only if all
        the roots of $Q_{z_{1},z_{2}}=0$ are real, which is equivalent to 
         $ D(Q_{z_{1},z_{2}})>0$, i.e. to $D(P_{z_1,z_2})<0$.
  \item[(ii)]
        $P_{z_{1},z_{2}}=0$ has two simple unimodular roots and 
        two distinct roots
        off the unit circle if and only if $Q_{z_{1},z_{2}}=0$ has 
        two distinct real roots and two non-real roots, which is 
        equivalent to $D(Q_{z_{1},z_{2}})<0$, i.e. to $D(P_{z_{1},z_{2}})>0$.
  \item[(iii)]
        $P_{z_{1},z_{2}}=0$ has at least one multiple root 
        if and only if $ D(P_{z_{1},z_{2}})=0$. 
        Assuming that $P_{z_{1},z_{2}}=0$ has at least one
        multiple root the claim is obtained by 
        \cite[Lemmas 4.2 and 4.3]{fhmv}.
\end{enumerate}

\textbf{Case 2.} Assume that $s_{2}-p_{2}\neq 0$. Then $P_{z_{1},z_{2}}%
\left( 1\right) =P_{z_{1},z_{2}}^{\ast }\left( 1\right) \neq 0$ and $%
Q_{z_{1},z_{2}}^{\ast }$ is a quartic polynomial.

We have $D\left( P_{z_{1},z_{2}}\right) 
=D\left( P_{z_{1},z_{2}}^{\ast}\right) $ and the discriminants of 
$P_{z_{1},z_{2}}^{\ast }$ and $Q_{z_{1},z_{2}}^{\ast }$ are related by 
$D(Q_{z_{1},z_{2}}^{\ast})=-64D(P_{z_{1},z_{2}}^{\ast })$. 

The discussion continues as in Case 1, replacing $P_{z_{1},z_{2}}$ by 
$P_{z_{1},z_{2}}^{\ast }$ and $Q_{z_{1},z_{2}}$ by $Q_{z_{1},z_{2}}^{\ast }$.

\textbf{Case 3. } %Let $s$ and $p$ be real numbers. 
The remaining case $ s_2=p_2=0 $.
We can assume that $ s $ and $ p $ are real numbers.
In this case $P_{z_1,z_2}(u)=\left(u-1\right) \left( u+1\right) 
\left( pu^{2}-su+p\right) $.
The polynomial $P_{z_1,z_2}$
has only real coefficients and $D(P_{z_1,z_2})=4\left( s^{2}-4p^{2}\right) ^{3}$.
The roots of $P_{z_1,z_2}=0$ are $1$, 
$\left( -1\right) $ and the roots of $R\left( u\right)=pu^{2}-su+p$.

If $D(P_{z_1,z_2})>0$, i.e. $\left\vert s\right\vert >2\left\vert p\right\vert $,
then the roots of $R=0$ are $(s\pm \sqrt{s^{2}-4p^{2}})/(2p)$ and cannot
be unimodular.

If $\left\vert s\right\vert <2\left\vert p\right\vert $, then the roots 
$(s\pm i\sqrt{4p^{2}-s^{2}})/(2p)$ of $R=0$ are unimodular.

If $\left\vert s\right\vert =2\left\vert p\right\vert $, then the double
root $s/(2p)$ of $R=0$ belongs to $\left\{ \pm 1\right\} $.

The claims \eqref{enu:1}, \eqref{enu:2} and \eqref{enu:3}
follow straightforward in this case. Moreover,
if $D(P_{z_1,z_2})=0$, then $P_{z_1,z_2}$ has a triple root and one simple root.
\end{proof}

Using Proposition \ref{rootnum} we get a purely algebraic proof of \cite[%
Proposition 4.5]{fhmv}, whose original proof used Complex Analysis arguments
and results on self-inversive polynomials. 

\begin{corollary}\label{diskroots}
  Let $z_{1},z_{2}\in \mathbb{C\setminus }\left\{ 0\right\} $. 
  Denote 
  \begin{gather*}
   P_{z_{1},z_{2}}(u)=\overline{z_{1}z_{2}}u^{4}-\left( \overline{z_{1}}
     +\overline{z_{2}}\right) u^{3}+\left( z_{1}+z_{2}\right) u-z_{1}z_{2}, \\
     E_{1}\left( z_{1},z_{2}\right) =\left\vert z_{1}+z_{2}\right\vert
        -\left\vert z_{1}z_{2}\right\vert 
            \mbox{\quad and \quad}
   E_{2}\left( z_{1},z_{2}\right)=\left\vert z_{1}+z_{2}\right\vert 
   -2\left\vert z_{1}z_{2}\right\vert .
  \end{gather*}
  Assume that
    $\mbox{{\rm Im}}  %%%%%%%%%%% Comment 3) %%%%%%%%%%%%
      \big(\left( z_{1}+1\right) \left( z_{2}+1\right)\big) \neq 0$.
  \begin{enumerate}
    \item[a)] 
        If $E_{2}\left( z_{1},z_{2}\right) >0$, then $P_{z_1,z_2}=0$ 
        has two distinct unimodular roots and two roots off the unit circle;
    \item[b)]
        If $E_{1}\left( z_{1},z_{2}\right) <0$, then $P_{z_1,z_2}=0$ 
        has four distinct unimodular roots;
    \item[c)]
        If $P_{z_1,z_2}=0$ has four distinct unimodular roots, 
        then $E_{2}\left(z_{1},z_{2}\right) <0$;
    \item[d)]
        If $P_{z_1,z_2}=0$ has two distinct unimodular roots and 
        two roots off the unit circle, then $E_{1}\left( z_{1},z_{2}\right) >0$.
  \end{enumerate}
\end{corollary}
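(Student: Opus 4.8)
The plan is to reduce each of the four assertions to a statement about the sign of the discriminant $D(P_{z_1,z_2})$, and then to read that sign off from the explicit formula \eqref{eq:DiscrP}. Write $s=z_1+z_2$ and $p=z_1z_2$, and abbreviate $X=\vert s\vert^2$, $Y=\vert p\vert^2$; since $z_1,z_2\neq 0$ we have $Y>0$, and the two relevant conditions become $E_2(z_1,z_2)>0\iff X>4Y$ and $E_1(z_1,z_2)<0\iff X<Y$. Proposition \ref{rootnum} supplies the dictionary I need: having four distinct unimodular roots is equivalent to $D(P_{z_1,z_2})<0$, and having two distinct unimodular roots together with two roots off the circle is equivalent to $D(P_{z_1,z_2})>0$. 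The standing hypothesis $\operatorname{Im}\big((z_1+1)(z_2+1)\big)\neq 0$ is exactly $P_{z_1,z_2}(-1)\neq 0$ and only serves to discard a degenerate configuration.

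First I would recast \eqref{eq:DiscrP}. Because $s^2\overline{p}-\overline{s}^2p=2i\operatorname{Im}(s^2\overline{p})$ is purely imaginary, the second term satisfies $-27\big(s^2\overline{p}-\overline{s}^2p\big)^2=108\,\big(\operatorname{Im}(s^2\overline{p})\big)^2\ge 0$, so
\[
  D(P_{z_1,z_2})=4\,(X-4Y)^3+108\,\big(\operatorname{Im}(s^2\overline{p})\big)^2 .
\]
In this form parts a) and c) are immediate. If $E_2>0$ then $X>4Y$, so the first summand is strictly positive and the second is nonnegative, whence $D(P_{z_1,z_2})>0$ and part a) follows. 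Conversely, four distinct unimodular roots force $D(P_{z_1,z_2})<0$; since the second summand is nonnegative, this forces $(X-4Y)^3<0$, i.e. $X<4Y$, i.e. $E_2<0$, which is part c).

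Parts b) and d) are the substantial ones, because there the negative cube $4(X-4Y)^3$ has to outweigh the nonnegative imaginary-part term. Here I would invoke the elementary bound $\big(\operatorname{Im}(s^2\overline{p})\big)^2\le\vert s^2\overline{p}\vert^2=X^2Y$, giving
\[
  D(P_{z_1,z_2})\le 4\,(X-4Y)^3+108\,X^2Y=Y^3\,g\!\left(\frac{X}{Y}\right),\qquad g(t)=4(t-4)^3+108t^2 .
\]
A one-variable check shows $g'(t)=12(t-4)^2+216t>0$ on $[0,1]$ while $g(1)=0$, so $g(t)<0$ for $0\le t<1$. Thus $E_1<0$ (that is $X<Y$) yields $D(P_{z_1,z_2})<0$ and hence four distinct unimodular roots, proving b); and for d), the hypothesis gives $D(P_{z_1,z_2})>0$, so $E_1\le 0$ (that is $X\le Y$) would entail $D(P_{z_1,z_2})\le Y^3 g(X/Y)\le 0$, a contradiction, whence $E_1>0$.

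The main obstacle is exactly this estimate behind b) and d): in contrast with a) and c), the sign of $D(P_{z_1,z_2})$ is not settled by the sign-definite cube alone, and the crux is to dominate the imaginary-part term by $X^2Y$ and thereby collapse the two-variable question into the single inequality $g(t)\le 0$ on $[0,1]$. The fact that $g$ vanishes precisely at $t=1$, i.e. at $\vert z_1+z_2\vert=\vert z_1z_2\vert$, is what forces the relevant threshold to be $E_1$ rather than $E_2$ and accounts for the gap between the sufficient conditions a), b) and the necessary conditions c), d).
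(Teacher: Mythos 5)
Your proposal is correct and takes essentially the same approach as the paper: both reduce a)--d) to the sign of $D(P_{z_1,z_2})$ via Proposition \ref{rootnum}, settle a) and c) directly from the form $D=4\big(|s|^2-4|p|^2\big)^3+108\big(\mbox{Im}(s^{2}\overline{p})\big)^2$, and settle b) and d) by dominating the oscillatory term via $\big(\mbox{Im}(s^{2}\overline{p})\big)^2\le |s|^4|p|^2$, which is the same inequality the paper writes as $|s|^{4}|p|^{2}\ge -\mbox{Re}\big(s^{4}\overline{p}^{2}\big)$. The only difference is cosmetic: your cubic $g(t)=4(t-4)^3+108t^2$, whose sign on $[0,1]$ you establish by monotonicity, factors as $g(t)=4(t-1)(t+8)^2$, so your bound $D\le Y^3 g(X/Y)$ is precisely the paper's identity $D(P_{z_1,z_2})=4\big(|s|^2-|p|^2\big)\big(|s|^2+8|p|^2\big)^2-54\big(|s|^4|p|^2+\mbox{Re}(s^4\overline{p}^2)\big)$ in homogenized form.
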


\begin{proof}
Denote $s=z_{1}+z_{2}$ and $p=z_{1}z_{2}$.

a) From \eqref{eq:DiscrP}, we have%
\begin{eqnarray} \label{eq:discriP}
  D\left( P_{z_1,z_2}\right) &=
   4\big( \left\vert s\right\vert^{2}-4\left\vert p\right\vert^{2}\big)^{3}
    +108\big( \mbox{Im}\left( s^{2}\overline{p}\right) \big)^{2}.  
\end{eqnarray}
If $E_{2}\left( z_{1},z_{2}\right) >0$, 
then $\left\vert s\right\vert^{2}-4\left\vert p\right\vert^{2}>0$, 
hence $D\left( P_{z_1,z_2}\right) >0$ and by
Proposition \ref{rootnum} \eqref{enu:2} 
$P_{z_1,z_2}=0$ has two distinct unimodular roots 
and two roots off the unit circle.

b) We may write 
\begin{equation*}
  D(P_{z_1,z_2})
    =4\big( \left\vert s\right\vert^{2}-\left\vert p\right\vert^{2}\big) 
     \big( \left\vert s\right\vert^{2}+8\left\vert p\right\vert^{2}\big)^{2}
     -54\big( \left\vert s\right\vert^{4}\left\vert p\right\vert^{2}
     +\mbox{Re}\left( s^{4}\overline{p}^{2}\right) \big) .
\end{equation*}%
Note that $\left\vert s\right\vert^{4}\left\vert p\right\vert^{2}
=\left\vert \overline{s}^{4}p^{2}\right\vert \geq 
-\mbox{Re}\left( s^{4}\overline{p}^{2}\right) $. 
If $E_{1}\left( z_{1},z_{2}\right) <0$, 
then $\left\vert s\right\vert^{2}-\left\vert p\right\vert^{2}<0$, 
therefore $D(P_{z_1,z_2})<0$ and by Proposition \ref{rootnum} \eqref{enu:1}
$P_{z_1,z_2}=0$ has four distinct unimodular roots.

c) Assume that $P_{z_1,z_2}=0$ has four distinct unimodular roots. 
By Proposition \ref{rootnum} \eqref{enu:1} $D(P_{z_1,z_2})<0$, 
but 
\[
  4\big( E_{2}( z_{1},z_{2}) \big)^{2}
   (\left\vert s\right\vert +2\left\vert p\right\vert )^{3}
    =D ( P_{z_1,z_2}) 
     -108\big( \mbox{Im}( s^{2}\overline{p}) \big) ^{2}<0,
\] 
therefore $E_{2}\left( z_{1},z_{2}\right) <0$.

d) Assume that $P_{z_1,z_2}=0$ has two distinct unimodular 
roots and two roots off the unit circle. 
By Proposition \ref{rootnum} \eqref{enu:2} $D(P_{z_1,z_2})>0$, 
but 
\[
  4E_{1}\left(z_{1},z_{2}\right) 
   \left( \left\vert s\right\vert +\left\vert p\right\vert\right) 
   \big( \left\vert s\right\vert^2 +8\left\vert p\right\vert^2 \big)^{2}
   =D \left( P_{z_1,z_2}\right) 
     +54\big( \left\vert s\right\vert^{4}
     \left\vert p\right\vert^{2}+\mbox{Re}( s^{4}\overline{p}^{2})
     \big) >0,
\]
therefore $E_{1}\left( z_{1},z_{2}\right) >0$.
\end{proof}

\begin{remark}
  The domain $E_{1}\left( z_1,z\right) <0$ is the interior of the circle 
  $ E_{1}\left( z_1,z\right) =0$ if $0<\vert z_1\vert <1$, 
  the exterior of the circle 
  $ E_{1}\left(z_1,z\right) =0$ if $\vert z_1\vert >1$
   and a half-plane not containing the
  origin for $\vert z_1\vert =1$. 
  The domain $E_{2}\left( z_1,z\right) >0$ is the exterior
  of the circle $E_{2}\left( z_1,z\right) =0$ if
  $\vert z_1\vert <\frac{1}{2}$, the
  interior of the circle $E_{1}\left( z_{1},z\right) =0$ if 
  $\vert z_1\vert >\frac{1}{2}$
  and a half-plane not containing the origin for $\vert z_1\vert =\frac{1}{2}$ .
\end{remark}

Next we will give geometrical interpretations of Proposition \ref{rootnum}
and \cite[Proposition 4.5]{fhmv} relying on the catacaustic of the unit
circle with radiant point $z_{1}$. 
Recall that the catacaustic of a curve is
the envelope of the family of the reflected rays by that curve, for a light
source at a given point, called the radiant point. In \cite{DG}, Drexler and
Gander studied the equation $Q_{z_{1},z_{2}}(t)=0$ \cite[Eq. (9)]{W} for 
$z_{1},z_{2}\in \mathbb{D}$, assuming without loss of generality that 
$z_{1}=c\in \left( -1,0\right) $. 
They determined the parametrical equations of a curve, called separatrix, 
bounding the regions that correspond to the
cases when the polynomial equation $Q_{z_{1},z_{2}}=0$ has two simple roots,
respectively four simple roots. 
If $z_{2}$ is on the separatrix, then $Q_{z_{1},z_{2}}=0$ has only real roots, 
either one double root and two simple
roots (if $z_{2}$ is a regular point) or a triple root and one simple root
(if $z_{2}$ is a cusp).  
In \cite{DG} the separatrix was represented for six
particular values of the parameter $c$ and highlighted through computer and
optical experiments.  One can see that for a fixed 
$z_{1}=c\in \left(-1,0\right) $ the separatrix from \cite{DG} 
is the catacaustic of the unit circle with the radiant point $z_{1}$, 
denoted in the following by $K_{z_{1}}$. 

Our graphical experiments have shown that the (generalized) circle given by 
$E_{1}\left( z_{1},z\right) :=\left\vert z_{1}+z\right\vert
 -\left\vert z_{1}z\right\vert =0$ is tangent to the catacaustic 
$K_{z_{1}}$ and that the (generalized) circle given by 
$E_{2}\left(z_{1},z\right) :=\left\vert z_{1}+z\right\vert -2\left\vert
  z_{1}z\right\vert =0$ intersects $K_{z_{1}}$ exactly at the cusps of 
$K_{z_{1}}$ (see, Figure \ref{fig:caustic}).
In the following we will explain the results of these
experiments using the connection between the discriminant of the polynomial 
$P_{z_{1},z_{2}}$ and the resultant 
$  \mbox{{\rm resul}}_u     %%%%%% Comment 4) %%%%%%%
  \left(P_{z_{1},z_{2}},P_{z_{1},z_{2}}^{\prime }\right) $. 

The equation of the reflected ray for the incident ray passing through $%
z_{1} $, with the reflection point $u=e^{i\varphi }$, is 
\begin{equation*}
  \left\vert 
     \begin{array}{ccc}
       z & \overline{z} & 1 \\ 
       u & 1/u & 1 \\ 
       \overline{z_{1}}u^{2} & z_{1}/u^{2} & 1%
     \end{array}%
  \right\vert =0,
\end{equation*}%
which is equivalent to 
\begin{equation*}
  F_{z_{1}}(z,\overline{z},u):=(u-z_{1})z+u^{3}\left( \overline{z_{1}}%
  u-1\right) \overline{z}-u(\overline{z_{1}}u^{2}-z_{1})=0\text{.}
\end{equation*}

Note that $F_{z_{1}}(z,\overline{z},u)=P_{z_{1},z}\left( u\right) $ for all 
$u,z_{1},z\in \mathbb{C}$.

The envelope of the family of curves $F_{z_{1}}(z,\overline{z},u)=0$, $u\in
\partial \mathbb{D}$ is the catacaustic of the circle, with radiant point $%
z_{1}$, which will be denoted by $K_{z_{1}}.$The standard equations of 
$K_{z_{1}}$ are 
\begin{equation}\label{eq:Env2}
  \left\{ 
    \begin{array}{c}
      F_{z_{1}}(z,\overline{z},u)=0 \\ 
      \dfrac{\partial }{\partial u}F_{z_{1}}(z,\overline{z},u)=0%
    \end{array}%
  \right. \text{, }u\in \partial \mathbb{D}\text{. }  
\end{equation}

Solving \eqref{eq:Env2} as a linear system with respect to $z$ 
and $\overline{z} $ it follows that: 
\begin{equation*}
  z=\frac{u(\overline{z_{1}}^{2}u^{3}-3\left\vert z_{1}\right\vert^{2}u+2z_{1})}
         {3\overline{z_{1}}u^{2}-2(2\left\vert z_{1}\right\vert^{2}+1)u+3z_{1}}
    \text{ \ and \ }
  \overline{z}=\frac{2\overline{z_{1}}u^{3}-3\left\vert z_{1}\right\vert^{2}u^{2}
     +z_{1}^{2}}{u^{2}\left( 3\overline{z_{1}}u^{2}
   -2(2\left\vert z_{1}\right\vert^{2}+1)u+3z_{1}\right) }.
\end{equation*}%
The parametric equations of the catacaustic, obtained from the above
equalities for $u=e^{i\varphi }$ are: 
\begin{equation*}
  \left\{ 
    \begin{array}{c}
     x=\dfrac{1}{2}
       \dfrac{\mbox{Re}( \overline{z_{1}}^{2}u^{3}-3\left\vert
             z_{1}\right\vert^{2}u+2z_{1}) }
            {3\mbox{Re}\left( \overline{z_{1}}u\right) 
            -(2\left\vert z_{1}\right\vert^{2}+1)
           \rule[-5pt]{0pt}{10pt}} \\ 
     y=\dfrac{1}{2}
       \dfrac{\mbox{Im}( \overline{z_{1}}^{2}u^{3}-3\left\vert 
             z_{1}\right\vert^{2}u+2z_{1}) }
           {3\mbox{Re}\left( \overline{z_{1}} u\right) 
           -(2\left\vert z_{1}\right\vert^{2}+1)}
    \end{array}
  \right. \text{, \ where }u=e^{i\varphi }\text{ \ and \ } z=x+iy.
\end{equation*}

Due to rotation invariance, we may assume that $z_{1}$ is a positive number.
In the case when $z_{1}=c$ is a positive real number, we get the well-known
parametric equations of the catacaustic of the circle with a radiant point
on the positive real axis \cite{Bromwich}: 
\begin{equation*}
  \left\{ 
    \begin{array}{l}
       x=\dfrac{c}{2}
         \dfrac{c\cos 3\varphi -3c\cos \varphi +2}{3c\cos \varphi-(2c^{2}+1)
             \rule[-5pt]{0pt}{10pt}}\\ 
       y=\dfrac{c}{2}
         \dfrac{c\sin 3\varphi -3c\sin \varphi }{3c\cos \varphi -(2c^{2}+1)}
    \end{array}%
  \right. .
\end{equation*}

The system \eqref{eq:Env2} is equivalent to 
$ \mbox{{\rm resul}}_{u} %%%% Comment 4) %%%%%
\left( F_{z_{1}}(z,\overline{z},u),
\frac{\partial }{\partial u}F_{z_{1}}(z,\overline{z},u)\right) =0$, 
which is an implicit equation of the catacaustic $K_{z_{1}}.$

Since $F_{z_{1}}(z,\overline{z},u)=P_{z_{1},z}\left( u\right) $ for all 
$u,z_{1},z\in \mathbb{C}$, using the connection between the discriminant of a
polynomial $f(x) $ and the resultant 
$ \mbox{{\rm resul}}_x  %%%%% Comment 4) %%%%%
 \left( f,f^{\prime }\right) $
we see that 
\begin{equation*}
  D\left( P_{z_{1},z}\right) 
    =\frac{1}{\overline{z_{1}z}}\;
          \mbox{{\rm resul}}_{u} %%%%%% Comment 4) %%%%%
           \Big(F_{z_{1}}(z,\overline{z},u),
               \frac{\partial }{\partial u}F_{z_{1}}(z,\overline{z},u)\Big) .
\end{equation*}
It follows that a simpler implicit equation of the catacaustic $K_{z_{1}}$
is 
\begin{equation*}
 \left( K_{z_{1}}\right) \text{: }D\left( P_{z_{1},z}\right) =0.
\end{equation*}
Using the formula \eqref{eq:discriP} with $s=z_{1}+z$ and $p=z_{1}z$ we get 
\begin{equation}
  D\left( P_{z_{1},z}\right) =4\Big( \left\vert z+z_{1}\right\vert
   ^{2}-4\left\vert z_{1}\right\vert^{2}\left\vert z\right\vert^{2}\Big)^{3}
  +108\Big( \mbox{Im}\big( \left( z+z_{1}\right) ^{2}\overline{z_{1}}%
  \overline{z}\big) \Big) ^{2}.  \label{Discrivar}
\end{equation}
Therefore, an implicit equation of the catacaustic of the circle with the
radiant point $z_{1}$ is 
\begin{equation*}
  4\Big( \left\vert z+z_{1}\right\vert^{2}-4\left\vert z_{1}\right\vert
   ^{2}\left\vert z\right\vert^{2}\Big)^{3}
  +108\Big( \mbox{Im}\big(
   \left( z+z_{1}\right) ^{2}\overline{z_{1}}\overline{z}\big) \Big) ^{2}=0.
\end{equation*}
Note that we may write 
\begin{eqnarray*}
  D\left( P_{z_{1},z}\right) 
   & = 4\big( \left\vert z+z_{1}\right\vert^{2}
       -\left\vert z_{1}\right\vert^{2}\left\vert z\right\vert^{2}\big)
        \big( \left\vert z+z_{1}\right\vert^{2}
       +8\left\vert z_{1}\right\vert^{2}\left\vert z\right\vert^{2}\big)^{2}\\
   &\quad   -54\Big( \left\vert z+z_{1}\right\vert^{4}\left\vert 
       z_{1}\right\vert^{2}\left\vert z\right\vert^{2}
       +\mbox{Re}\big( \left( z+z_{1}\right) ^{4}\overline{z_{1}}^{2}
      \overline{z}^{2}\big) \Big) .
\end{eqnarray*}

In conclusion, $z_{2}\in K_{z_{1}}$ if and only if $D\left(
P_{z_{1},z_{2}}\right) =0$. By Proposition \ref{rootnum} \eqref{enu:3}, 
if $D\left(P_{z_{1},z_{2}}\right) =0$, then all the roots $u$ of $P_{z_{1},z_{2}}=0$ 
are unimodular and $P_{z_{1},z_{2}}=0$ has either a double root and two simple
roots, or a triple root and one simple root. The latter case occurs if and
only if $z_{2}$ is a cusp of the catacaustic of the circle with radiant
point $z_{1}$, as shown in \cite{B} in the case where $z_{1}=c$ a positive
number, see also \cite[Lemma 4.3]{fhmv}.

\begin{figure}
  \centerline{
    {\includegraphics[width=0.4\linewidth]{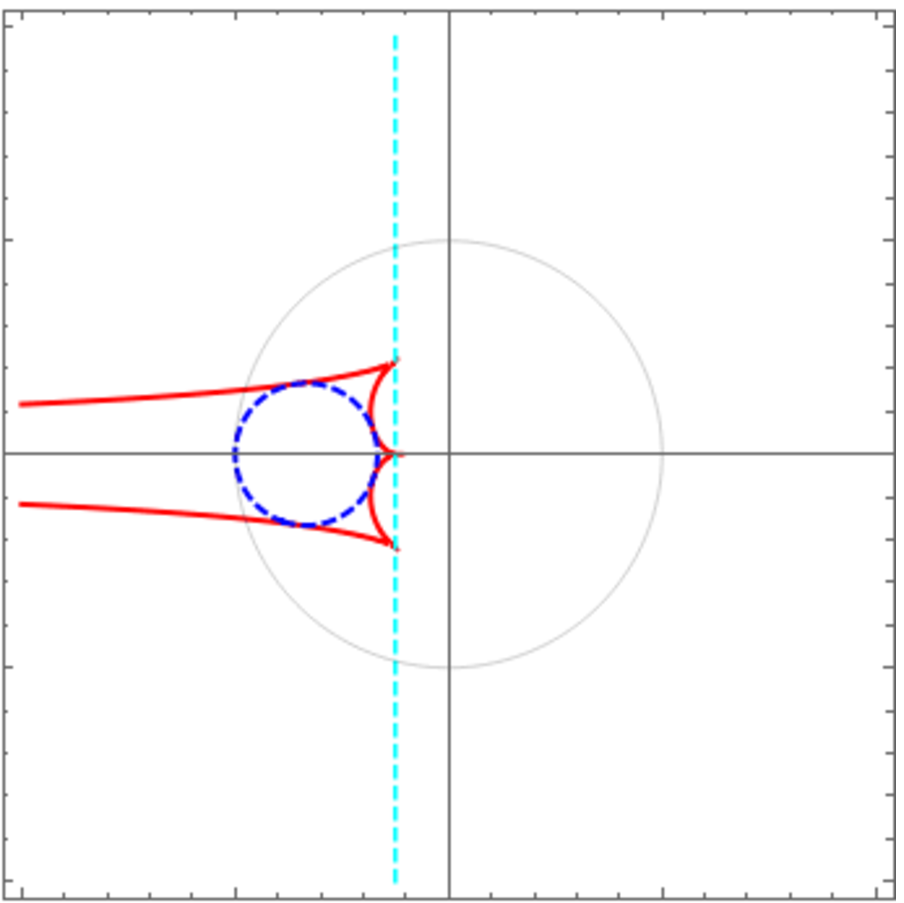}}
      \qquad
    {\includegraphics[width=0.4\linewidth]{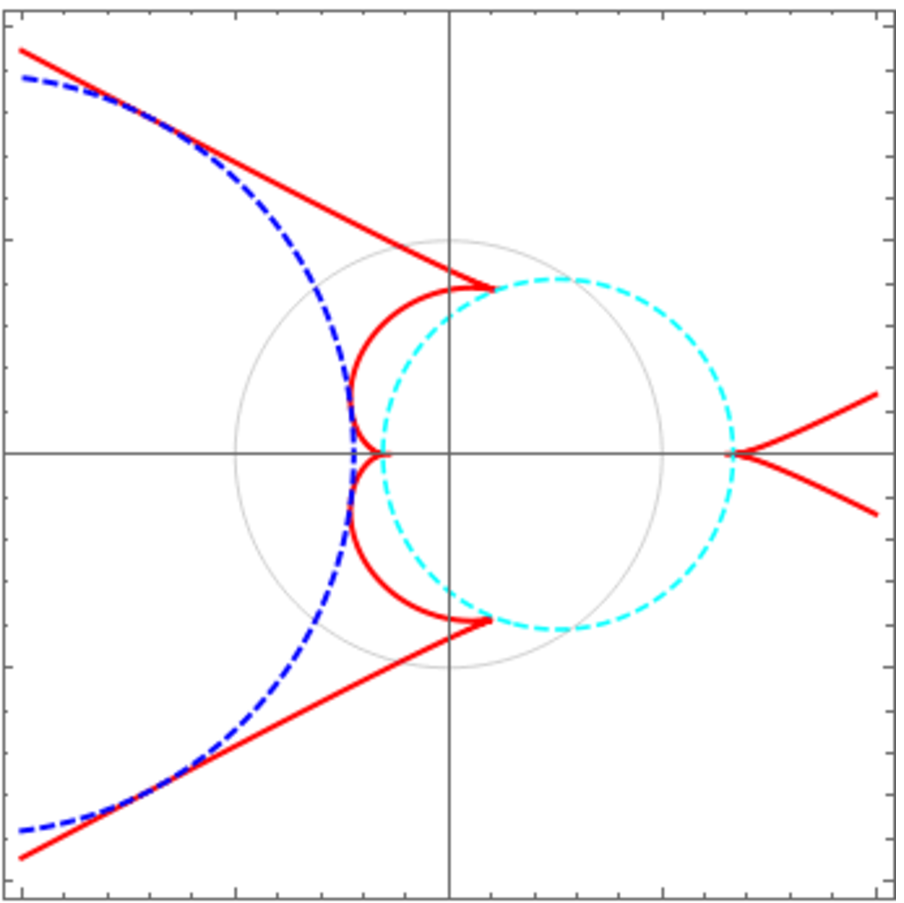}}
      }
  \caption{The catacaustics of the unit circle with radiant points 
           $ c=0.5 $ (left) and $ c=0.8 $ (right).
           The thick (red) curves indicate the catacaustics.
           The thick (blue) and thin (cyan) dotted circles
           represent $ E_1(c,z)=0 $ and $ E_2(c,z)=0 $, respectively.}
  \label{fig:caustic}
\end{figure}

If $z_{2}$ belongs to a connected component of the complement of the
catacaustic $D\left( P_{z_{1},z}\right) =0$ we have either $D\left(
P_{z_{1},z_{2}}\right) >0$ (hence $P_{z_{1},z_{2}}=0$ has two simple
unimodular roots and two distinct roots off the unit circle) or $D\left(
P_{z_{1},z_{2}}\right) <0$ (hence $P_{z_{1},z_{2}}=0$ has four simple
unimodular roots).

Assuming that $z_{1}\neq 0$, using Corollary \ref{diskroots} we easily
identify the regions $D\left( z_{1},z\right) >0$ and $D\left( z_{1},z\right)
<0$, since 
\begin{eqnarray*}
  0 &\in &\left\{ z\in \mathbb{C}:E_{2}\left( z_{1},z\right) >0\right\}
   \subset \left\{ z\in \mathbb{C}:D\left( z_{1},z\right) <0\right\} 
  \text{ and} \\
  \left( -z_{1}\right) &\in &\left\{ z\in \mathbb{C}:E_{2}\left(
   z_{1},z\right) <0\right\} \subset \left\{ z\in \mathbb{C}:D\left(
   z_{1},z\right) >0\right\} .
\end{eqnarray*}

\begin{remark}
  If $\left\vert z_{1}\right\vert >1$, then the catacaustic $K_{z_{1}}$ is a
  Jordan curve contained in the closed unit disk. If $\left\vert
  z_{1}\right\vert >1$ and \thinspace $\left\vert z_{2}\right\vert >1$ it
  follows that $D\left( P_{z_{1},z_{2}}\right) <0$, hence the equation $%
  P_{z_{1},z_{2}}\left( u\right) =0$ has four simple unimodular roots, as it
  is proved geometrically in \cite[Proposition 3.8 (i)]{fhmv}.
\end{remark}

%%%%%%%%%%%%%%%%%%%%%%
%%%%%%%%%%%%%%%%%%%%%
%%%%%%%%%%%%%%%%%%%%% Comment 5) %%%%%%%%%%%%%%%%%%%%%%
\section{{Alhazen's problem and the circle of Apollonius}}
\label{sect:apollo}
The circle of Apollonius is the locus of points that 
have a constant ratio of distances from two given points.
Figure \ref{fig:apollo} illustrates the two geometric ideas
to find the PA-point on the unit circle,
the ellipse method (on the left)
and the angle bisection property of the circle of Apollonius (on the right).

\begin{figure}
  \centerline{
     \fbox{\includegraphics[width=0.4\linewidth]{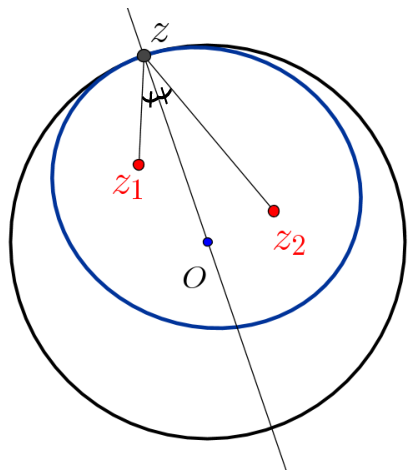}}
      \qquad
     \fbox{\includegraphics[width=0.4\linewidth]{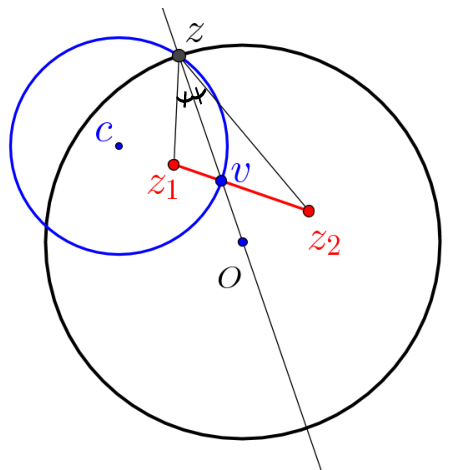}}
      }
  \caption{Solution using an ellipse (left). 
           Solution using the circle of Apollonius (right).}
  \label{fig:apollo}
\end{figure}

\begin{lemma}\label{lem:apollo}
  Assume that the segment $ [z_1,z_2] $ is either in $ \mathbb{D} $,
  or in $ \mathbb{C}\setminus\overline{\mathbb{D}} $,
  and that $ 0,\, z_1,\, z_2 $ are not collinear.

  Then, $ z\in\partial\mathbb{D} $ is a PA-point if and only if there exists
  some $ t \in(0,1) $ such that  
  $ z=\pm\big(tz_2+(1-t)z_1\big)/\vert tz_2+(1-t)z_1\vert  $ 
  and $ z $ belongs to
  the circle of Apollonius with respect to the two points $ z_1,z_2 $ 
  and the ratio $ t $.
\end{lemma}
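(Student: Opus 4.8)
The plan is to reduce the statement to the classical angle-bisector characterization of the circle of Apollonius. Recall first that $z\in\partial\mathbb{D}$ is a PA-point precisely when the line through $z$ and the origin bisects the angle $\angle z_{1}zz_{2}$: the inward normal to $\partial\mathbb{D}$ at $z$ is radial, so the defining condition $\vert\angle(z_{1},z,0)\vert=\vert\angle(0,z,z_{2})\vert$ says exactly that the normal line $z0$ is an axis of symmetry for the two rays $z\to z_{1}$ and $z\to z_{2}$. Throughout I would write $A_{t}:=(1-t)z_{1}+tz_{2}$ for the point dividing $[z_{1},z_{2}]$ internally; since $A_{t}-z_{1}=t(z_{2}-z_{1})$ and $z_{2}-A_{t}=(1-t)(z_{2}-z_{1})$, the point $A_{t}$ is characterized by $\vert A_{t}-z_{1}\vert:\vert A_{t}-z_{2}\vert=t:(1-t)$, and I take ``circle of Apollonius of ratio $t$'' to mean the locus $\{w:\vert w-z_{1}\vert/\vert w-z_{2}\vert=t/(1-t)\}$, whose internal division point of $[z_{1},z_{2}]$ is exactly $A_{t}$. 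The hypothesis that $0,z_{1},z_{2}$ are not collinear guarantees $A_{t}\neq 0$ for every $t\in(0,1)$, so $A_{t}/\vert A_{t}\vert$ is well defined and $z_{1},z,z_{2}$ form a genuine triangle; the hypothesis that $[z_{1},z_{2}]$ lies entirely inside $\mathbb{D}$ or entirely outside $\overline{\mathbb{D}}$ ensures both rays from $z$ land on the same side, so $\angle z_{1}zz_{2}$ is the relevant angle.

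For the forward implication I would start from a PA-point $z=u$ and set $t:=\vert u-z_{1}\vert/(\vert u-z_{1}\vert+\vert u-z_{2}\vert)\in(0,1)$. With this choice $\vert u-z_{1}\vert/\vert u-z_{2}\vert=t/(1-t)$, so $u$ lies on the circle of Apollonius of ratio $t$ by definition. By the angle-bisector theorem the internal bisector of $\angle z_{1}uz_{2}$ meets $[z_{1},z_{2}]$ at the point dividing it in the ratio $\vert u-z_{1}\vert:\vert u-z_{2}\vert=t:(1-t)$, namely at $A_{t}$. Since $u$ is a PA-point, this internal bisector is the line $u0$; hence $0,u,A_{t}$ are collinear, and as $\vert u\vert=1$ this forces $u=\pm A_{t}/\vert A_{t}\vert$. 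This exhibits the required $t$ and sign.

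For the converse I would take $t\in(0,1)$ satisfying both conditions. Membership of $z$ in the Apollonius circle of ratio $t$ gives $\vert z-z_{1}\vert/\vert z-z_{2}\vert=t/(1-t)$, so by the angle-bisector theorem the internal bisector of $\angle z_{1}zz_{2}$ meets $[z_{1},z_{2}]$ at the point dividing it in ratio $t:(1-t)$, which is again $A_{t}$. The condition $z=\pm A_{t}/\vert A_{t}\vert$ says $z$ is a real multiple of $A_{t}$, so $0,z,A_{t}$ are collinear; therefore the line $z0$ coincides with the internal bisector line through $z$ and $A_{t}$. Consequently $z0$ bisects $\angle z_{1}zz_{2}$ and $z$ is a PA-point, closing the equivalence.

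The two applications of the angle-bisector theorem are routine; the point requiring the most care is the bookkeeping that links the single parameter $t$ simultaneously to the division point $A_{t}=(1-t)z_{1}+tz_{2}$, to the Apollonius ratio $t/(1-t)$, and to the normalization $\pm A_{t}/\vert A_{t}\vert$, together with the observation that in the converse \emph{both} hypotheses are genuinely used — the Apollonius condition pins the bisector to pass through $A_{t}$, while the collinearity condition pins it to pass through $0$. I would also record that either sign yields the angle equality: the internal bisector line is an axis of symmetry for the rays $z\to z_{1}$ and $z\to z_{2}$, so any direction along it, in particular $z\to 0$ regardless of orientation, makes equal angles with those two rays. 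This is precisely what allows the single statement to cover both the internal configuration $[z_{1},z_{2}]\subset\mathbb{D}$ and the external one $[z_{1},z_{2}]\subset\mathbb{C}\setminus\overline{\mathbb{D}}$.
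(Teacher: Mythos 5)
Your proof is correct, and it rests on the same foundation as the paper's: the observation that $z\in\partial\mathbb{D}$ is a PA-point exactly when the line through $z$ and the origin is the internal bisector line of $\angle(z_1,z,z_2)$, combined with the angle-bisector characterization of the circle of Apollonius. The sufficiency direction is the same in both (the paper compresses it to one sentence). In the necessity direction you proceed dually to the paper: you define $t$ from the distance ratio, $t=\vert z-z_1\vert/(\vert z-z_1\vert+\vert z-z_2\vert)$, so membership in the Apollonius circle is immediate from its ratio-locus description, and you then invoke the angle-bisector theorem to conclude that the bisector line $z0$ meets $[z_1,z_2]$ at $A_t$, forcing $z=\pm A_t/\vert A_t\vert$. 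The paper instead defines $t$ from the intersection point $v=L_z\cap[z_1,z_2]$, so the normalization $z=\pm v/\vert v\vert$ is immediate, and it then must show that $z$ lies on the Apollonius circle, which it does via the harmonic conjugate $u$ of $v$ with respect to $z_1,z_2$ and the circle with diameter $[u,v]$; this forces a case split at $\vert z_1\vert=\vert z_2\vert$, $t=1/2$, where the Apollonius ``circle'' degenerates to the perpendicular bisector. Your ordering avoids both the harmonic-conjugate construction and the case split (the ratio-locus description handles $t=1/2$ uniformly as a generalized circle), at the cost of needing the triangle $z_1zz_2$ to be nondegenerate; note that noncollinearity of $0,z_1,z_2$ alone does not quite give this, but it does follow from your own converse argument ($z$ on both the line $0A_t$ and the line $z_1z_2$ would force $z=A_t$, impossible since $[z_1,z_2]$ avoids $\partial\mathbb{D}$), and the corresponding degenerate configuration is glossed over in the paper's proof as well.
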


\begin{proof}
Sufficiency follows from the angle bisector property for 
the circle of Apollonius.

{\it Necessity}:
Let $ z \in\partial\mathbb{D} $ be a PA-point.
The line $ L_z $ joining $ z $ to the origin bisects the angle
$ \angle(z_1,z,z_2) $.
If $ \vert z_1\vert =\vert z_2\vert  $ and $ L_z $ 
is the perpendicular bisector of the segment
$ [z_1,z_2] $, the claim follow with $ t=1/2 $.
This corresponds to the case in which the circle of Apollonius 
is a straight line.

In the remaining case, the intersection between $ L_z $ and the segment 
$ [z_1,z_2] $ is a point $ v=tz_2+(1-t)z_1 $ for some 
$ t\in(0,1)\setminus\{1/2\} $.
Let  $ u $ be the harmonic conjugate of $ v $ with respect to 
$ z_1 $ and $ z_2 $.
Then $ z $ belongs to the circle of diameter $ [u,v] $, 
which is the circle of Apollonius with respect to the two points $ z_1,z_2 $ 
and the ratio $ t $.
\end{proof}

\begin{proposition}\label{prop:apollo}
  Assume that the segment $ [z_1,z_2] $ is either in $ \mathbb{D} $,
  or in $ \mathbb{C}\setminus\overline{\mathbb{D}} $,
  and that $ 0,\, z_1,\, z_2 $ are not collinear.
  Then the point $ z \in\partial\mathbb{D} $ is a PA-point 
  if and only if $ z $ is written as
  $ z=\pm\big(tz_2+(1-t)z_1\big)/\vert tz_2+(1-t)z_1\vert  $ for some 
  $ t\in(0,1) $ such that
   \begin{equation}\label{eq:apollo}
     (1-2t)^2\Big((1-t)^2\vert z_1\vert^2+t^2\vert z_2\vert^2+
        2t(1-t)\mbox{\rm Re}\,(z_1\overline{z_2})\Big)
      =\Big((1-t)^2\vert z_1\vert^2-t^2\vert z_2\vert^2\Big)^2.
   \end{equation}
\end{proposition}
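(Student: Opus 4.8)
The plan is to translate the geometric criterion of Lemma \ref{lem:apollo} into the algebraic condition \eqref{eq:apollo}. By that lemma, $z \in \partial\mathbb{D}$ is a PA-point if and only if there are $t \in (0,1)$ and a sign $\sigma \in \{+1,-1\}$ such that $z = \sigma v/|v|$, where $v := (1-t)z_1 + tz_2$, and $z$ lies on the circle of Apollonius of $z_1, z_2$ with ratio $t$. Since $v$ divides $[z_1,z_2]$ so that $|v-z_1| : |v-z_2| = t : (1-t)$, the angle bisector theorem identifies that circle with the locus $\{w : (1-t)|w-z_1| = t|w-z_2|\}$, so membership of $z$ amounts to the sign-free equation
\[
  (1-t)^2|z-z_1|^2 = t^2|z-z_2|^2 .
\]
Before computing I would record two facts used to clear degenerate factors later: $v \neq 0$, because $0 \notin [z_1,z_2]$ (the three points are non-collinear), and $|v| \neq 1$, because the segment $[z_1,z_2]$ lies either inside $\mathbb{D}$ or outside $\overline{\mathbb{D}}$.

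The computational core is to substitute $z = \sigma v/|v|$ into this equation. The cleanest device is to set $\lambda := v/z$; since $|z| = 1$ and $z = \sigma v/|v|$, the quantity $\lambda = \sigma|v|$ is a nonzero real number with $\lambda^2 = |v|^2$ and $z = v/\lambda$, so $|z - z_j|^2 = |v - \lambda z_j|^2/\lambda^2$. Clearing $\lambda^2$, the membership equation becomes the polynomial identity $(1-t)^2|v-\lambda z_1|^2 = t^2|v-\lambda z_2|^2$. Writing $a = 1-t$, $b = t$, $A = |z_1|^2$, $B = |z_2|^2$ and $C = \mathrm{Re}(z_1\overline{z_2})$, and expanding with $v = az_1 + bz_2$ and $a+b=1$, I expect the difference of the two sides to telescope—via $(a-\lambda)^2 - b^2 = (a-b-\lambda)(1-\lambda)$ and $a^2 - (b-\lambda)^2 = (a-b+\lambda)(1-\lambda)$—into the factored form
\[
  (1-\lambda)\,\lambda\,\big[(a-b)\lambda - (a^2A - b^2B)\big] = 0 ,
\]
where the bracket arises after replacing $a^2A + b^2B + 2abC = |v|^2 = \lambda^2$. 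Carrying out this expansion and factorisation cleanly is the step I expect to be the main obstacle, since the telescoping that isolates the clean factor $(1-\lambda)$ must be tracked carefully.

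With the factorisation in hand, $\lambda = 0$ is excluded since $v \neq 0$ and $\lambda = 1$ is excluded since $|v| \neq 1$, so the membership equation is equivalent to the linear relation
\[
  (1-2t)\,\lambda = (1-t)^2|z_1|^2 - t^2|z_2|^2 .
\]
Squaring this and inserting $\lambda^2 = |v|^2 = (1-t)^2|z_1|^2 + t^2|z_2|^2 + 2t(1-t)\mathrm{Re}(z_1\overline{z_2})$ produces exactly \eqref{eq:apollo}, which gives the implication ``PA-point $\Rightarrow$ \eqref{eq:apollo}''. For the converse I would read the symbol $\pm$ in the statement as the freedom to choose $\sigma$: equation \eqref{eq:apollo} is precisely the assertion $\big((1-2t)|v|\big)^2 = \big((1-t)^2|z_1|^2 - t^2|z_2|^2\big)^2$, so the linear relation above holds with $\lambda = \sigma|v|$ for a suitable sign $\sigma$; then $z = \sigma v/|v|$ satisfies the membership equation, lies on the Apollonius circle, and is a PA-point by the sufficiency half of Lemma \ref{lem:apollo}. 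This squaring step is exactly where the sign bookkeeping must be watched, and I would finish by treating $t = 1/2$ separately: there both displayed relations degenerate to $|z_1| = |z_2|$, both signs become admissible, and this matches the perpendicular-bisector case of Lemma \ref{lem:apollo}.
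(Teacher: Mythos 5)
Your proof is correct, and at the structural level it is the paper's proof: both arguments invoke Lemma \ref{lem:apollo}, reduce membership of $z=\pm v/\vert v\vert$ in the Apollonius circle to a signed linear relation, and then square that relation using $\vert v\vert^2=(1-t)^2\vert z_1\vert^2+t^2\vert z_2\vert^2+2t(1-t)\mbox{Re}\,(z_1\overline{z_2})$ to obtain \eqref{eq:apollo}. Indeed, your relation $(1-2t)\sigma\vert v\vert=(1-t)^2\vert z_1\vert^2-t^2\vert z_2\vert^2$ is exactly the paper's condition $\mbox{Re}\,(\overline{u}v)=\pm\vert v\vert$ once one inserts $\mbox{Re}\,(\overline{u}v)=\big((1-t)^2\vert z_1\vert^2-t^2\vert z_2\vert^2\big)/(1-2t)$. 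Where you genuinely differ is in how the circle is encoded: the paper uses the diameter form through the internal and external division points, $\vert z\vert^2-\mbox{Re}\,\big((u+v)\overline{z}\big)+\mbox{Re}\,(\overline{u}v)=0$ with $u=\big(-tz_2+(1-t)z_1\big)/(1-2t)$, and clears the factor $\vert v\vert-1\neq 0$; you use the distance-ratio form $(1-t)^2\vert z-z_1\vert^2=t^2\vert z-z_2\vert^2$ and factor the substituted polynomial as $(1-\lambda)\,\lambda\,\big[(1-2t)\lambda-\big((1-t)^2\vert z_1\vert^2-t^2\vert z_2\vert^2\big)\big]$ with $\lambda=\sigma\vert v\vert$, clearing $\lambda\neq 0$ (non-collinearity) and $\lambda\neq 1$ (segment off the unit circle). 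I verified the telescoping you flagged as the main risk: with $a=1-t$, $b=t$, the identities $(a-\lambda)^2-b^2=(a-b-\lambda)(1-\lambda)$, $a^2-(b-\lambda)^2=(a-b+\lambda)(1-\lambda)$ and $a(a-\lambda)-b(b-\lambda)=(a-b)(1-\lambda)$ do yield exactly that product, so the step is sound. Your encoding buys one small simplification: the ratio form degenerates gracefully to the perpendicular bisector at $t=1/2$, so no separate case is forced (the paper, which divides by $1-2t$ to define $u$, must dispose of $t=1/2$ first, and you choose to do so as well even though you need not); the paper's encoding, in turn, makes the excluded factor geometrically transparent as the condition that $\pm v/\vert v\vert$ avoid the unit circle. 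Finally, both proofs share the same mild looseness in the converse direction — equation \eqref{eq:apollo} only guarantees that one of the two signs places $z$ on the Apollonius circle — and your reading of the symbol $\pm$ as freedom to choose the sign is precisely the reading the paper's own proof requires.
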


\begin{proof}
By Lemma \ref{lem:apollo}, $ z\in\partial\mathbb{D} $ is a PA-point
if and only if there exists some $ t\in(0,1) $
such that $ z=\pm\big(tz_2+(1-t)z_1\big)/\vert tz_2+(1-t)z_1\vert  $ 
belongs to the circle of Apollonius with respect to the two points $ z_1,z_2 $ 
and the ratio $ t $.

For $ \vert z_1\vert =\vert z_2\vert  $ and $ t=1/2 $, 
\eqref{eq:apollo} holds, and $ z $ is a PA-point.

We will consider other cases.
Let $ v $ be the internal division point of the segment $ [z_1,z_2] $ 
in the ratio $ t:(1-t) $ and $ u $ the external division point of 
$ [z_1,z_2] $ in the same ratio, i.e.,
\[
  v=tz_2+(1-t)z_1,\qquad  u=\frac{-tz_2+(1-t)z_1}{1-2t} .
\]
The corresponding circle of Apollonius is given by 
$ \vert \zeta-(u+v)/2\vert =\vert (u+v)/2c-v\vert  $, which is equivalent to
\begin{equation}\label{eq:uv-apollo}
  \vert z\vert^2-\mbox{\rm Re}\,\big((u+v)\overline{z}\big)
      +\mbox{\rm Re}\,(\overline{u}v)=0.
\end{equation}
By \eqref{eq:uv-apollo}, $ z=\pm v/\vert v\vert  $ is on the corresponding 
circle of Apollonius if and only if
\[
    \mbox{\rm Re}\,(\overline{u}v)
      \Big( 1\mp \frac{1}{\vert v\vert }\Big)=\pm \vert v\vert -1.
\]
As the segment $ [z_1,z_2] $ does not intersect the unit circle,
we have $ \vert v\vert -1\neq 0 $.
Then $ z=\pm v/\vert v\vert  $ is on the circle of Apollonius with diameter 
$ \vert u-v\vert  $
if and only if $ \mbox{Re}\,(\overline{u}v)=\pm \vert v\vert  $, i.e.
\[
   \big(\mbox{Re}\,(\overline{u}v)\big)^2=\vert v\vert^2.
\]
Inserting here the formulas of $ u $ and $ v $, we get \eqref{eq:apollo}.
\end{proof}

\begin{remark}
  The equation \eqref{eq:apollo} can also be written as follows.
  \begin{align*}
  T(z_1,z_2) = &\big((\vert z_1\vert^2-\vert z_2\vert^2)^2
             -4\vert z_1-z_2\vert^2\big)t^4 \\
      &   -4\big(\vert z_1\vert^2(\vert z_1\vert^2-\vert z_2\vert^2)
          -2\vert z_1-z_2\vert^2-\vert z_1\vert^2+\vert z_2\vert^2\big)t^3\\
      & +\big(2\vert z_1\vert^2(3\vert z_1\vert^2-\vert z_2\vert^2)
         -8\vert z_1\vert^2+4\vert z_2\vert^2
          -5\vert z_1-z_2\vert^2\big)t^2\\
      &  -\big(\vert z_1\vert^2(4\vert z_1\vert^2-5)+\vert z_2\vert^2
          -\vert z_1-z_2\vert^2\big)t
           +\vert z_1\vert^2(\vert z_1\vert^2-1)=0.
  \end{align*}
  Therefore, if $ z_1,z_2\in\mathbb{D} $,
  the equation $ T(z_1,z_2)=0 $ has at least two real roots $ t_1,t_2 $
  satisfying $ 0<t_1\leq 1/2 \leq t_2<1 $ since the following hold
  \begin{align*}
      & T(z_1,z_2)\vert _{t=0}=z_1\overline{z_1}(z_1\overline{z_1}-1)<0,\quad
        T(z_1,z_2)\vert _{t=1}=z_2\overline{z_2}(z_2\overline{z_2}-1)<0,\\
      & \mbox{and}\quad
        T(z_1,z_2)\vert _{t=\frac12}
           =\frac{1}{16}(z_1\overline{z_1}-z_2\overline{z_2})^2\geq0.
  \end{align*}
%  So, it is possible to bound the arc in which the solution exists.
{ %%%%%%%%%%%%%%%%%5 Comment 6) %%%%%%%%%%%%%%%%%%
  So, the PA-point that attains the supremum in the definition of the triangular ratio metric
  $ s_{\mathbb{D}}(z_1,z_2) $  is on the shorter of the two arcs connecting 
   $ e^{i\arg (z_1)} $ and $ e^{i\arg (z_2)} $.
}
\end{remark}

%%%%%%%%%%%%%%%%%
\section{Alhazen's problem on a conic domain}\label{sect:conic}

In this section, we consider Alhazen's problem for quadric surfaces.
Since the section of a quadric surface (conicoid) by each plane 
is a quadratic curve (conic), 
we will consider this problem in a planar domain whose boundary is a conic.

\begin{problem}\label{prob:conic}
  For two points $ z_1,\, z_2\in\mathbb{C} $ and a conic domain $ D $,
  find the PA-points on $ \partial D$.
\end{problem}
Since it is difficult to extend the solution method
using the circle of Apollonius,
we will apply the solution method using ellipses to this problem.

Here, we consider similarity transformations 
of the form $ A(z)=\alpha z+\beta $, where $ \alpha,\, \beta \in\mathbb{C}$.
Then $ A $ maps each subdomain $ D $ of $ \mathbb{C} $
onto $ A(D) $ which is a translated, rotated, and rescaled version
of $ D $.
The similarity transformation
\[
     A(z)=\frac{2}{z_1-z_2}z-\frac{z_1+z_2}{z_1-z_2},
\]
sends the points $ z_1 $  and $ z_2 $ to $ 1 $ and $ -1 $, respectively,
and  maps conics to conics.
So, instead of Problem \ref{prob:conic},
we just need to solve the following problem.

\begin{problem}\label{prob:conic2}
  For two points $ z_1,\, z_2\in\mathbb{C} $ and a conic domain $ D $,
  let $ C $ be the boundary of $ A(D) $, where
  $ A(z)=2/(z_1-z_2)z-(z_1+z_2)/(z_1-z_2) $.
  Then, find $ u\in C $ such that
  \[
       \vert \angle(-1,u,0)\vert =\vert \angle(0,u,1)\vert . 
  \]
\end{problem}

Let $ C(=\partial A(D)) $  be a conic given by
\begin{equation}\label{eq:C}
   C\ : \ c(z)=\overline{a}z^2+pz\overline{z}+a\overline{z}^2
        +\overline{b}z+b\overline{z}+q=0 \qquad 
        (a,b\in\mathbb{C}, \ p,q \in\mathbb{R}),
\end{equation}
and $ E $ the ellipse with foci $ 1 $  and $ -1 $
\begin{equation}\label{eq:E}
   E\ :\    \vert z-1\vert +\vert z+1\vert =r \quad (r>2). 
\end{equation}
The ellipse $ E $ is also expressed as
\begin{equation}\label{eq:ez}
   e(z)=z^2+(2-2r_2)z\overline{z}+\overline{z}^2+r_2^2-2r_2=0,
\end{equation}
where $ r_2:=r^2/2 \,(>2) $.

Note that \eqref{eq:ez} is obtained by squaring both sides of 
the equation \eqref{eq:E}, so it includes the hyperbola 
$ \big\vert \vert z-1\vert -\vert z+1\vert \big\vert =r $ 
as well as the ellipse $ E $.

\medskip

From now on, we assume that $ C $ is a non-degenerate conic,
i.e. $ C $ is an ellipse, a hyperbola, or a parabola.
A conic \eqref{eq:C} can be classified as follows.
For $ p^2-4a\overline{a}<0 $, $C$ represents 
a hyperbola or its degenerate form;
for $ p^2-4a\overline{a}>0 $, $C$ represents 
an ellipse or its degenerate form;
and for $ p^2-4a\overline{a}=0 $, $C$ represents 
a parabola or its degenerate form.
For a more detailed classification of conics $C$, see for example
\cite[Lemma 3]{bla-circum}.

The following result is an extension of Lemma \ref{lem:pa-eq}.
Since the boundary of the domain is extended from the unit circle 
to a conic, the technique of the proof of Lemma \ref{lem:pa-eq},
i.e. Theorem 1.1 in \cite{fhmv}, is not available.
However, it is difficult to calculate it manually and directly.
Here we use the Risa/Asir, the symbolic computation system,
to {process}  the equation.

\begin{theorem}\label{lem:tan}
  {      %%%%%%%%%%%%%%%%%%%%%% Comment 7) %%%%%%%%%%%
   Let $ C $  be a conic given by
   $ c(z)=\overline{a}z^2+pz\overline{z}+a\overline{z}^2
        +\overline{b}z+b\overline{z}+q=0 \
     (a,b\in\mathbb{C}, \ p,q \in\mathbb{R}),
   $
   and $ E $ the ellipse given by
   $ e(z)=z^2+(2-2r_2)z\overline{z}+\overline{z}^2+r_2^2-2r_2=0 $.
  }
  Suppose $ E $ does not coincide with $ C $ for some $ r $, 
  and has a point of tangency on $ C $ for some $ r $.
  The point of tangency is given by the solution of
  $ F_4=W_6z^6+W_5z^5+W_4z^4+W_3z^3+W_2z^2+W_1z+W_0=0 $, where
\begin{align*}
 W_6&=4\overline{a}(\overline{b}bp-\overline{b}^2a
        -\overline{a}b^2)(p^2-4\overline{a}a),\\
%\end{align*}
%
%\begin{align*}
  W_5&= -2\Big[
      bp^5-\overline{b}(a+\overline{a})p^4-b(4\overline{a}q+8\overline{a}a
     +\overline{b}^2)p^3
     +\overline{b}\big( 8\overline{a}aq+8\overline{a}a^2
     +(8\overline{a}^2+\overline{b}^2)a \\*
  & 
     -3\overline{a}b^2\big)p^2
     +4\overline{a}b(4\overline{a}aq+4\overline{a}a^2+4\overline{b}^2a
     +\overline{a}b^2)p-4a\overline{a}\overline{b}
     \big( 8\overline{a}aq
     +4\overline{a}a^2\\*
  &
     +(4\overline{a}^2+3\overline{b}^2)a+3\overline{a}b^2
     \big)\Big],\\
%\end{align*}
%
%\begin{align*}
 W_4&=-\Big[p^6-\big(
     (4a+4\overline{a})q+a^2+10\overline{a}a-9b^2+\overline{a}^2
     +\overline{b}^2\big)p^4-2b\overline{b}(2q+11a+\overline{a})p^3 \\*
 &  
     +\Big(16\overline{a}aq^2+4\big(
      8\overline{a}a^2+(8\overline{a}^2+2\overline{b}^2)a
       -5\overline{a}b^2\big)q
     +8\overline{a}a^3+(32\overline{a}^2+14\overline{b}^2)a^2 \\*
 &
     +(-42\overline{a}b^2+8\overline{a}^3+18\overline{b}^2\overline{a})a
     +(-6\overline{a}^2-5\overline{b}^2)b^2\Big)p^2 
     +2b\overline{b}\big(
     48\overline{a}aq+44\overline{a}a^2 \\*
 & 
     +(4\overline{a}^2+7\overline{b}^2)a
     +\overline{a}b^2\big)p 
    -64\overline{a}^2a^2q^2-16a^2\overline{a}(4\overline{a}a+4\overline{a}^2
    +7\overline{b}^2)q
    -16\overline{a}^2a^4-(32\overline{a}^3 \\*
 & 
     +56\overline{b}^2\overline{a})a^3 
     +(24\overline{a}^2b^2-16\overline{a}^4
    -56\overline{b}^2\overline{a}^2-9\overline{b}^4)a^2+(24\overline{a}^3
    -6\overline{b}^2\overline{a})b^2a+3\overline{a}^2b^4\Big],\\
%\end{align*}
%
%\begin{align*}
 W_3&= -2\Big[
     2bp^5-\overline{b}(q+4a)p^4-b\big((14a+2\overline{a})q+2a^2
     +12\overline{a}a
     -8b^2+2\overline{a}^2+\overline{b}^2\big)p^3 \\*
 & 
    +\overline{b}\big(4aq^2+(20a^2
    +20\overline{a}a-7b^2)q+4a^3+24\overline{a}a^2
    +(-24b^2+4\overline{a}^2+3\overline{b}^2)a-\overline{a}b^2\big)p^2 \\*
 &
    +b\Big(32\overline{a}aq^2+\big(56\overline{a}a^2+(8\overline{a}^2
    +26\overline{b}^2)a-6\overline{a}b^2\big)q+8\overline{a}a^3
    +(16\overline{a}^2+26\overline{b}^2)a^2\\*
 & 
    +(-18\overline{a}b^2
     +8\overline{a}^3+2\overline{b}^2\overline{a})a+(-6\overline{a}^2
    -\overline{b}^2)b^2\Big)p
    -\overline{b}\Big(80\overline{a}a^2q^2+4a\big(20\overline{a}a^2 \\*
 &
    +(16\overline{a}^2+6\overline{b}^2)a   
    -3\overline{a}b^2\big)q+16\overline{a}a^4
    +(32\overline{a}^2+12\overline{b}^2)a^3 
    +(-28\overline{a}b^2+16\overline{a}^3+16\overline{b}^2\overline{a})a^2 \\*
 & 
    +(-24\overline{a}^2-\overline{b}^2)b^2a-\overline{a}b^4\Big)\Big], \\
%\end{align*}
%
%\begin{align*}
 W_2&= 2\Big[\big( (3a-\overline{a})q-3b^2 \big)p^4
     +b\overline{b}(3q+9a+\overline{a})p^3
     +\Big(-2(6a^2+2\overline{a}a-b^2)q^2 -\big(4a^3 \\*
 & 
    +16\overline{a}a^2
    +(-27b^2-4\overline{a}^2+9\overline{b}^2)a
    +4\overline{a}b^2\big)q+3(b^2-3\overline{b}^2)a^2
    +(9\overline{a}b^2-\overline{b}^2\overline{a})a-7b^4\\*
 &  
   +2\overline{a}^2b^2\Big)p^2
   -b\overline{b}\big(28aq^2+(60a^2-5b^2)q+8a^3+20\overline{a}a^2-(19b^2
   -12\overline{a}^2)a-5\overline{a}b^2\big)p \\*
 & 
   +32\overline{a}a^2q^3
   +4a\big(12\overline{a}a^2+(4\overline{a}^2+11\overline{b}^2)a
   -3\overline{a}b^2\big)q^2
   +\big(16\overline{a}a^4+(16\overline{a}^2
   +44\overline{b}^2)a^3\\*
 & 
   +(-32\overline{a}b^2+40\overline{b}^2\overline{a})a^2
   -(12\overline{a}^2+9\overline{b}^2)b^2a+\overline{a}b^4\big)q
   +8\overline{b}^2a^4+(-4\overline{a}b^2+20\overline{b}^2\overline{a})a^3 \\*
 & 
   +\big(-(4\overline{a}^2+15\overline{b}^2)b^2+12\overline{b}^2\overline{a}^2
   +3\overline{b}^4\big)a^2
   +(5\overline{a}b^4-15\overline{b}^2\overline{a}b^2)a
   +2\overline{a}^2b^4\Big],\\
%\end{align*}
%
%\begin{align*}
W_1&=2\Big[2b\big(q^2+(3a-\overline{a})q-b^2\big)p^3
   -\overline{b}\big(8aq^2+(12a^2
   -4\overline{a}a+b^2)q-6b^2a-2\overline{a}b^2\big)p^2\\*
 & 
   -b\Big(8aq^3+(32a^2
   -8\overline{a}a-2b^2)q^2+\big(8a^3+8\overline{a}a^2
   +(-20b^2-2\overline{b}^2)a
   +2\overline{a}b^2\big)q \\*
 & 
   +(-2b^2+6\overline{b}^2)a^2+(-2\overline{a}b^2
   +6\overline{b}^2\overline{a})a+3b^4-\overline{b}^2b^2\Big)p 
   +\overline{b}\Big(32a^2q^3\\*
 & 
   +4a(12a^2+4\overline{a}a-3b^2)q^2+\big(16a^4
   +16\overline{a}a^3+(-32b^2+8\overline{b}^2)a^2
   -12\overline{a}b^2a+b^4\big)q\\*
 & 
   +(-4b^2+4\overline{b}^2)a^3+(-4\overline{a}b^2
   +4\overline{b}^2\overline{a})a^2+(5b^4-3\overline{b}^2b^2)a
   +2\overline{a}b^4\Big)\Big],\\
%\end{align*}
%
%\begin{align*}
W_0 & =q^2p^4-2\overline{b}bqp^3
   +\big(-8aq^3+(-8a^2+2b^2)q^2+(6b^2+2\overline{b}^2)aq
   -b^4+\overline{b}^2b^2\big)p^2\\*
 &
   +2b\overline{b}\big(4aq^2+(-4a^2-b^2)q+(b^2
   -\overline{b}^2)a\big)p
   +16a^2q^4+8a(4a^2-b^2)q^3\\*
 & 
   +\big(16a^4+(-32b^2+8\overline{b}^2)a^2+b^4\big)q^2 
   -2a\big((4b^2-4\overline{b}^2)a^2-5b^4+3\overline{b}^2b^2\big)q \\*
 & 
   +(b-\overline{b})(b+\overline{b})\big((b^2-\overline{b}^2)a^2-b^4\big).
\end{align*}
\end{theorem}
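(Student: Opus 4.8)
The plan is to realize the PA-points on $C$ as the contact points of the confocal family of ellipses $E$ with the fixed conic $C$, and then to produce the equation of their locus by elimination. First I would translate the angle condition of Problem~\ref{prob:conic2} into a tangency condition: by the reflective property of the ellipse, a ray issued from the focus $1$ and reflected at any point of $E$ passes through the focus $-1$, so a point $u\in C$ satisfies $|\angle(-1,u,0)|=|\angle(0,u,1)|$ exactly when the confocal ellipse $E$ through $u$ is tangent to $C$ at $u$. This is the same mechanism used in Lemma~\ref{lem:pa-eq}, and it reduces the theorem to computing the contact points of the one-parameter family $e(z)=0$, with parameter $r_2$, against $c(z)=0$.

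Next I would write down the algebraic system for a contact point. Such a point $z$ must satisfy $c(z)=0$ and $e(z)=0$, together with the condition that the two real curves share a tangent line, i.e.\ that their gradients are parallel. In Wirtinger derivatives the gradient-parallel condition reads
\[
  c_z\,e_{\overline z}-c_{\overline z}\,e_z=0,
\]
where $c_z=2\overline a z+p\overline z+\overline b$, $c_{\overline z}=2a\overline z+pz+b$, $e_z=2z+(2-2r_2)\overline z$, and $e_{\overline z}=(2-2r_2)z+2\overline z$. Treating $z$ and $\overline z$ as independent indeterminates — the standard device for real algebraic curves in complex coordinates — this becomes a system of three polynomial equations in the three unknowns $z,\overline z,r_2$.

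I would then eliminate the two auxiliary unknowns. The tangency equation is linear in $r_2$, so I solve it as $r_2=-B/A$ (with $A,B$ the polynomials in $z,\overline z$ collecting the $r_2$- and constant parts) and substitute into $e(z)=0$; clearing denominators gives a single relation $G(z,\overline z):=A^2(z+\overline z)^2+2AB(z\overline z+1)+B^2=0$. Eliminating $\overline z$ between $c$ and $G$ through the resultant $\mathrm{resul}_{\overline z}(c,G)$ produces a polynomial in $z$ alone. Equivalently, one may first eliminate $r_2$ by $\mathrm{resul}_{r_2}(e,\,c_z e_{\overline z}-c_{\overline z}e_z)$, since $e$ is quadratic and the tangency equation linear in $r_2$. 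This is precisely the elimination that the paper delegates to Risa/Asir.

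The last step is to discard the extraneous factors. Because $e(z)$ was obtained by squaring $|z-1|+|z+1|=r$, its zero locus also contains the confocal hyperbola, and the factor $z+\overline z$ divides the non-$r_2$ part of the tangency equation; consequently the raw resultant carries spurious factors arising from the hyperbola branch, from $z+\overline z=0$, and from the vanishing of the leading coefficients used to form the resultant. Dividing these out should leave exactly the genuine ellipse-contact locus, the degree-six polynomial $F_4=W_6z^6+\cdots+W_0$, and reading off its coefficients in terms of $a,\overline a,b,\overline b,p,q$ yields the stated $W_k$. I expect the decisive difficulty to be twofold: the resultant is far too large to expand by hand (hence the computer algebra), and — conceptually more delicate — one must correctly identify and factor away the extraneous components so that the surviving factor is precisely the tangency locus of the \emph{ellipse} with $C$. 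That the remaining factor has degree six, matching the six roots obtained by Agrawal et al.\ in \cite{atr2}, serves as a useful consistency check.
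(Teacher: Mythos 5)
Your geometric reduction---a point of $C$ is a reflection point exactly when a member of the confocal family $e(z;r_2)=0$ touches $C$ there---is the paper's starting point too, but your algebraic encoding of tangency is genuinely different. The paper first eliminates $\overline z$ from $c=0$ and $e=0$ to get a quartic $S(z;r_2)$, encodes tangency as ``$S$ and $S'$ have a common root'' as in \eqref{eq:common}, and then computes $\mathrm{resul}_{r_2}(S,S')$, which Risa/Asir factors as $a^{2}F_1F_2F_3^{2}F_4$; you instead impose the Wirtinger--gradient condition $c_z e_{\overline z}-c_{\overline z}e_z=0$, exploit its linearity in $r_2$ to substitute into $e=0$, and finish with $\mathrm{resul}_{\overline z}(c,G)$. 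Both encodings are legitimate, both end in a computer-algebra elimination, and both face the same final task of recognizing the extraneous factors. Your predicted factor from $z+\overline z$ is real: since $G$ is divisible by $z+\overline z$, the resultant picks up $c(z,-z)$, which is precisely the paper's $F_1$ (intersections of $C$ with the imaginary axis, coming from the degenerate member $e(z;0)=(z+\overline z)^2$ of the family); you do not mention the companion factor from the degenerate ellipse $e(z;2)=(z-\overline z)^2$ (the paper's $F_2$) or the factor recording multiple roots in $r_2$ (the paper's $F_3$), but those would surface in the computation and can be identified similarly.

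The genuine flaw is your last step: ``the raw resultant carries spurious factors arising from the hyperbola branch\dots\ dividing these out should leave exactly the genuine ellipse-contact locus.'' There is no hyperbola factor to divide out, and $F_4$ is \emph{not} the ellipse-contact locus. For each fixed parameter value, $e(z;r_2)=0$ is a single conic: an ellipse when $r_2>2$ and a confocal hyperbola when $0<r_2<2$, so the ellipse/hyperbola dichotomy is an inequality on the eliminated variable $r_2$ and is invisible to any resultant or polynomial factorization. Consequently the surviving degree-six factor is the contact locus of the \emph{whole} confocal family, and its roots unavoidably mix the two types; the paper's own examples show this, e.g.\ in Figure \ref{pic:1} the roots $u_3,u_6$ of $F_4=0$ are ellipse tangencies while $u_1,u_5$ are tangencies of $C$ with confocal hyperbolas. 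Executed literally, your step would find nothing to divide by---or would tempt you to discard part of $F_4$ itself. The correct conclusion is the weaker statement actually made in Theorem \ref{lem:tan}: every tangency point is among the roots of $F_4=0$; singling out the ellipse-tangency point (or the minimizer of $|z-1|+|z+1|$, i.e.\ the PA-point) is a separate selection step performed after solving, as in Theorem \ref{thm:6ji} and the worked examples.
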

\begin{proof}
Note that, if $ a=0 $, the conic $ C $ is a circle
and we can apply Lemma \ref{lem:pa-eq}.
Therefore, we assume that $ a\neq 0 $.

From the assumption of Theorem \ref{lem:tan}, 
there is an intersection point of $ E $ and $ C $.
Eliminating $ \overline{z} $ from $ c(z)=0 $ and $ e(z)=0 $,
we have the following quartic equation of $ z $ variable, 
\begin{align*}
   S(z) = & \big(4\overline{a}ar_2^2+2((a+\overline{a})p-4\overline{a}a)r_2
           +(p-a-\overline{a})^2\big)z^4 \\
         & +\big(4\overline{b}ar_2^2+2(\overline{b}p+(b-4\overline{b})a
           +\overline{a}b)r_2+2(b-\overline{b})(p-a-\overline{a})\big)z^3 \\
         &  +\big(2apr_2^3+(p^2-6ap+4aq+2a^2-2\overline{a}a)r_2^2 
             -2(p^2-(q+2a)p+4aq+2a^2\\
         & \quad -2\overline{a}a-\overline{b}b)r_2
            -2qp+(2a+2\overline{a})q+b^2-2\overline{b}b+\overline{b}^2\big)z^2\\
         &  +\big(2bar_2^3+2(bp+(-3b-\overline{b})a)r_2^2-2(2bp-bq+(-2b
           -2\overline{b})a)r_2-2(b-\overline{b})q\big)z\\
         & +a^2r_2^4-4a^2r_2^3+(-2aq+4a^2+b^2)r_2^2+(4aq-2b^2)r_2+q^2=0.
\end{align*}
As there is a point of tangency of two curves $ C $ and $ E $,
the equation $ S(z)=0 $ must have multiple roots.
This condition is equivalent to the requirement that the system of equations
\begin{equation}\label{eq:common}
     S(z)=0, \quad \mathrm{and} \quad S'(z)=0 
\end{equation}
has a common root.

Here, we remark that the leading coefficient of $ S(z)=0 $ as $ z $  variable
is not constant zero. 
In fact, if 
\[ 
   4\overline{a}ar_2^2+2\big((a+\overline{a})p-4\overline{a}a\big)r_2
   +(p-a-\overline{a})^2  \equiv 0 
\] 
then $ a=p=0  $ holds, and
$ C:\ c(z)=\overline{b}z+b\overline{z}+q=0 $ degenerates to a line.
Moreover, the leading term $ a^2r_2^4 $ of $ S(z)=0 $ as $ r_2 $ variable
does not vanish by the assumption.

Now, eliminating $ r_2 $ from \eqref{eq:common} by
calculating
\begin{equation}\label{eq:resul}
   \mbox{{\rm resul}}_{r_2} %%%%%%%%%%% Comment 4) %%%%%%%
     (S(z),S'(z))=0, 
\end{equation}
we have $ a^2 F_1F_2F_3^2F_4=0 $, where
\begin{align*}
    F_1(z)&=(p-a-\overline{a})z^2+(b-\overline{b})z-q,\\
    F_2(z)&=(p+a+\overline{a})z^2+(b+\overline{b})z+q,\\
    F_3(z)&=(4a^2-4\overline{a}a)z^4-4\overline{b}az^3+(p^2-4aq-4a^2)z^2
             +2bpz+b^2.
\end{align*}
Here, we use Risa/Asir, a symbolic computation system, for computing
the resultant in \eqref{eq:resul}. 
(See, for example, \cite{cox2} for details on the relationship 
between the resultant and the solution of a system of equations.)

We need to examine the properties of factors $ F_1,\cdots,F_4 $.
\begin{itemize}
  \item The factor $ a\neq0 $ from the assumption.
        (If $ a=0 $, $ C $ is a circle.)
  \item The equation $ F_1=0 $ is obtained from substituting
        $ -z $ for $ \overline{z} $ in $ c(z)=0 $.
        Therefore $ F_1=0 $ gives 
        the intersection points of $ C $ and the imaginary axis.
  \item The equation $ F_2=0 $ is obtained from substituting 
        $ z $ for $ \overline{z} $ in  $ c(z)=0 $.
        Therefore $ F_2=0 $ gives the intersection points of
        $ C $ and the real axis.
  \item The solutions of $ F_3=0 $ give the condition that there exist 
        multiple roots for $ r_2 $.
        In fact, we have the following,
        \begin{align*}
           & \mbox{{\rm resul}}_{r_2}  %%%%%% Comment 4) %%
                 \Big(S,\, \frac{\partial}{\partial r_2}S\Big) \\
           & \quad 
               =16a^4(z-1)^2(z+1)^2(\overline{a}z^2+(-p+\overline{b})z+q+a-b)\\
           & \qquad
              \times (\overline{a}z^2+(p+\overline{b})z+q+a+b)
                  ((p^2-4\overline{a}a)z^2+(2bp-4\overline{b}a)z-4aq+b^2)^2\\
           & \qquad
              \times((4a^2-4\overline{a}a)z^4-4\overline{b}az^3+(p^2-4aq-4a^2)z^2
                    +2bpz+b^2)^2.
        \end{align*}
\end{itemize}
Therefore, the equality $ aF_1F_2F_3=0 $ does not give the condition that
the equation $ S=0 $ has multiple roots.
Hence, the equation $ F_4=0 $ gives the condition 
that the equation $ S=0 $ has multiple roots and includes 
a point of tangency as its solution.
\end{proof}

\begin{remark}
  In the above proof, the conic $ C $ is assumed not to be the circle.
  In the case that $ a=0 $ and $ p\neq0 $, $C$ represents a circle. 
  Moreover, we can set $ p=1 $ without loss of generality, and 
  if $ \vert b\vert^2>q $,
  $ C : c(z)=z\overline{z}+\overline{b}z+b\overline{z}+q=0 $ 
  represents a circle of radius  $ \sqrt{\vert b\vert^2-q} $ with center $ b $.

  Substituting $ a=0 $ and $ p=1 $ for $ F_4 $,
  we have
  \begin{align*}
  F_4(0,b,1,q,z)=& (2bz+b^2+1)\big((\overline{b}^2-1)z^4
          +(2\overline{b}q+(2\overline{b}^2-4)b)z^3+(6\overline{b}bq-6b^2)z^2\\
       &  +(2bq^2+2\overline{b}b^2q-4b^3)z
        +(b^2+1)q^2-2\overline{b}bq-b^4+\overline{b}^2b^2\big)=0.
  \end{align*}
  The first factor represents a single point.
  If the second factor is transformed by the similarity transformation 
  $ z=sw-b\ (s^2=\vert b\vert^2-q) $, the equation that gives the PA-point for 
  the unit circle is obtained as follows,
  \begin{equation}\label{eq:trans}
   (\vert b\vert^2-q)^2\big((\overline{b}^2-1)w^4
           -2\overline{b}\sqrt{\vert b\vert^2-q}w^3
          +2b\sqrt{\vert b\vert^2-q}w-b^2+1\big)=0.
  \end{equation}

  Then, by the transformation $ z=sw-b $, the foci $1$ and $-1$ 
  correspond to $(1+b)/s$ and $(-1+b)/s$, respectively.
  For these two points, the equation \eqref{eq:reflect} of the PA-point
  is
  \[
     (\overline{b}^2-1)z^4-2\overline{b}\sqrt{\vert b\vert^2-q}z^3
        +2b\sqrt{\vert b\vert^2-q}z-b^2+1=0.
  \]
  The above equation coincides with \eqref{eq:trans}.
  
  Thus, the equation $ F_4=0 $ is also valid for the case that 
  $ C $ is a circle.
\end{remark}

\begin{theorem}\label{thm:6ji}
  If the segment $ [-1,1] $ does not intersect with $ C $,
  the point $ z \in C $ such that the sum $ \vert z-1\vert +\vert z+1\vert  $ 
  is minimal is given as a root of the equation 
  $ F_4=0 $ of degree $ 6 $.
\end{theorem}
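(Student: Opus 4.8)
The plan is to view $|z-1|+|z+1|$ as the defining function of the confocal family of ellipses in \eqref{eq:E} and to extract the minimizer as a point of tangency, to which Theorem \ref{lem:tan} applies directly. Write $f(z)=|z-1|+|z+1|$ for $z\in C$. Since the segment $[-1,1]$ does not meet $C$, we have $f(z)>2$ on $C$, so each attained value $r=f(z)$ exceeds $2$ and the level set $\{w:f(w)=r\}$ is the non-degenerate ellipse $E$ of \eqref{eq:E} with that $r$, having foci $1$ and $-1$. Note that for $r>2$ the squared locus \eqref{eq:ez} has no spurious real points, because the companion hyperbola $\bigl|\,|w-1|-|w+1|\,\bigr|=r$ is empty (that difference never exceeds $2$); hence $E$ coincides with the real zero set of $e$, and Theorem \ref{lem:tan} may legitimately be invoked through \eqref{eq:ez}.

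First I would establish that $r^{*}=\inf_{z\in C}f(z)$ is attained. If $C$ is an ellipse it is compact, so the minimum exists. If $C$ is a parabola or a hyperbola it is unbounded, but $f(z)\ge 2|z|-2\to\infty$ as $|z|\to\infty$ along $C$; since $C$ is closed, this coercivity again forces the minimum to be attained at some $z^{*}\in C$. In every case $z^{*}\neq\pm 1$, since the foci lie on the excluded segment, so $f$ is smooth in a neighbourhood of $z^{*}$.

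Next I would show that at $z^{*}$ the ellipse $E_{r^{*}}:=\{f=r^{*}\}$ is tangent to $C$. This ellipse separates the plane into the bounded region $\{f<r^{*}\}$ and the unbounded region $\{f>r^{*}\}$, and by minimality $C$ lies in the closed region $\{f\ge r^{*}\}$ while touching $E_{r^{*}}$ at $z^{*}$. Were the two smooth conics to cross transversally at $z^{*}$, a sub-arc of $C$ would dip into $\{f<r^{*}\}$, contradicting minimality; hence $C$ and $E_{r^{*}}$ share a common tangent line at $z^{*}$, i.e. they are tangent there. Since $E_{r^{*}}$ is an ellipse with foci $\pm 1$ while $C$ is a non-degenerate conic disjoint from $[-1,1]$, the two curves cannot coincide, so the hypotheses of Theorem \ref{lem:tan} are satisfied with $r=r^{*}$ (equivalently $r_{2}=(r^{*})^{2}/2$).

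Finally, Theorem \ref{lem:tan} asserts that every tangency point of such an $E$ with $C$ is a root of the degree-$6$ polynomial $F_{4}$; applying this to $z^{*}$ gives $F_{4}(z^{*})=0$, which is the claim. The delicate step is the tangency argument: one must confirm that the minimizer is an interior contact point where $f$ is smooth (ensured by $z^{*}\neq\pm 1$) and, in the unbounded cases, that the minimum exists at all (ensured by the coercivity $f\to\infty$). Once tangency is secured, the conclusion is an immediate specialization of Theorem \ref{lem:tan} to the confocal ellipse realizing the minimal value $r^{*}$.
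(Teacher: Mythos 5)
The paper gives no proof of Theorem \ref{thm:6ji}: it is stated as an immediate consequence of Theorem \ref{lem:tan}, with the connecting argument left implicit. Your proposal supplies exactly that missing argument, and its main line is sound: the minimum $r^{*}$ of $f(z)=\vert z-1\vert+\vert z+1\vert$ over $C$ is attained (compactness if $C$ is an ellipse; the coercivity bound $f(z)\geq 2\vert z\vert-2$ together with closedness of $C$ if it is a parabola or hyperbola); the minimizer $z^{*}$ is not a focus, since $\pm 1\in[-1,1]$ and $C\cap[-1,1]=\emptyset$, so $f$ is smooth near $z^{*}$; minimality forces $C$ and the level ellipse $\{f=r^{*}\}$ to share a tangent line at $z^{*}$, since a transversal crossing would put points of $C$ inside $\{f<r^{*}\}$; and Theorem \ref{lem:tan} then gives $F_{4}(z^{*})=0$. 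The remark that for $r>2$ the real zero set of \eqref{eq:ez} contains no hyperbola branch, hence equals the ellipse \eqref{eq:E}, is a legitimate point that the paper does not spell out.

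One claim in your proof is false, however: ``since $E_{r^{*}}$ is an ellipse with foci $\pm 1$ while $C$ is a non-degenerate conic disjoint from $[-1,1]$, the two curves cannot coincide.'' Disjointness from $[-1,1]$ does not prevent $C$ from belonging to the confocal family: $C:\ x^{2}/4+y^{2}/3=1$ is a non-degenerate conic disjoint from $[-1,1]$ that coincides with $E$ for $r=4$. In that case every point of $C$ is a minimizer and the conclusion cannot hold as stated, because the construction of Theorem \ref{lem:tan} degenerates: substituting $a=\overline{a}=1$, $b=0$, $p=2-2r_{2}$, $q=r_{2}^{2}-2r_{2}$ (so that $p^{2}=4(q+1)$) into the formulas makes all the coefficients $W_{0},\dots,W_{6}$ vanish, i.e.\ $F_{4}\equiv 0$, and there is no degree-$6$ equation at all. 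This degenerate case is excluded only by the hypothesis of Theorem \ref{lem:tan} that $E$ does not coincide with $C$, a hypothesis which Theorem \ref{thm:6ji} implicitly inherits (the paper's own statement glosses over this as well). The repair is simply to invoke that hypothesis rather than attempt to derive non-coincidence from disjointness; with that change your proof is complete.
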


The following examples show how to find the reflection points.

\begin{example}
   Let $ D  =\{z\in\mathbb{C}\,:\, \vert z-2\vert 
   +\vert z-(1+2i)\vert >\sqrt{6}\} $.
   The PA-point $ u \in\partial D $  can be found by the following procedure. 

   The boundary $ C =\partial D $ is the ellipse written as 
   \[
        c(z)=(-3+4i)z^2-14z\overline{z}+(-3-4i)\overline{z}^2
          +(38-20i)z+(38+20i)\overline{z}-71=0. 
   \]
   In this case, the equation $ F_4=0 $ is given by
           \begin{align*}
             & (924-1232i)z^6+(-15308+7432i)z^5+(81677+2608i)z^4
              +(-189086-106196i)z^3\\
             & \quad +(185621+278356i)z^2+(-37976-281192i)z
              +(-29632+97824i)=0,
           \end{align*}
   and its roots are
       \[
        \begin{array}{lll}
        u_1\approx 1.923740-0.117041i, &  u_2\approx 1.772166+0.309916i, &
        u_3\approx 1.259144+0.426617i, \\
        u_4\approx 2.808489+0.435057i, &  u_5\approx 0.825548+1.934592i, &  
        u_6\approx 1.235845+2.067480i.
        \end{array}
      \] 
   It is easy to see that four roots $ u_1,u_3,u_5$, and $ u_6 $ are in $ C $, 
   and the function $ \vert z-1\vert +\vert z+1\vert  $ attains 
   its minimum at the point $ u_6\in C $ (see Figure \ref{pic:1} for details).
   Note that the case $ D=\{\vert z-2\vert +\vert z-(1+2i)\vert <\sqrt{6}\} $ 
   can be calculated exactly in the same way.

\begin{figure}
 \centerline{\fbox{\includegraphics[width=0.6\linewidth]{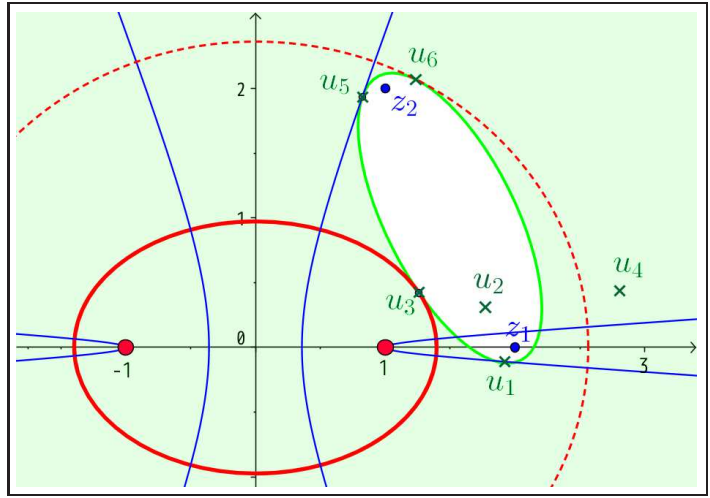}}}
  \caption{The leaning ellipse indicates $C$.
           The points $ u_3 $ and $ u_6 $ are tangent points of
           the thick and dotted ellipses with $ C $, respectively.
           The foci of these thick and dotted ellipses are both $ -1,1 $.
           Here, the points $ u_1 $ and $ u_5 $ are tangent points of
           $ C $ and the hyperbolas with foci $ -1,1 $.
           }\label{pic:1}
\end{figure}
\end{example}

\begin{example}
   Let $ D $ be the region given by 
   $ \{z\in\mathbb{C}\,:\, \big\vert \vert z-3\vert 
    -\vert z-(1+2i)\vert \big\vert >\sqrt{5}\} $.
   The PA-point $ u \in\partial D $  can be found by the following procedure. 

   The boundary $ C=\partial D $ is the hyperbolic curve defined by
   \[
      c(z)=8iz^2-4z\overline{z}-8i\overline{z}^2
          +(24-36i)z+(24+36i)\overline{z}-99=0. 
   \]
   In this case, the equation $ F_4=0 $ is given by 
           \begin{align*}
             &  6048z^6+(-66960-34992i)z^5+(212760+346428i)z^4
                 +(47268-1215900i)z^3\\
             & \quad +(-1363032+1675647i)z^2+(2156652-408726i)z
                  +(-850176-550557i)=0,
           \end{align*}
   and its roots are
     \[
       \begin{array}{lll}
         u_1\approx 2.542018-0.357669i,  & u_2\approx 2.645886+0.629896i, &
         u_3\approx 3.393387+0.463604i, \\
         u_4\approx 1.323205+1.940610i, &  u_5\approx 1.5i ,           &
         u_6\approx  1.166931+1.609271i.
      \end{array}
    \]
   It is easy to check that four roots  $ u_1,u_3 ,u_4, $ and $u_5$ are in 
   $ C $, and the function $ \vert z-1\vert +\vert z+1\vert  $ attains 
   its minimum at the point $ u_5\in C $ (see Figure \ref{pic:3} for details).
   Note that the case $ D=\{\big\vert \vert z-3\vert -\vert z
    -(1+2i)\vert \big\vert <\sqrt{5}\} $
   can be calculated just the same way.
\begin{figure}
 \centerline{\fbox{\includegraphics[width=0.6\linewidth]{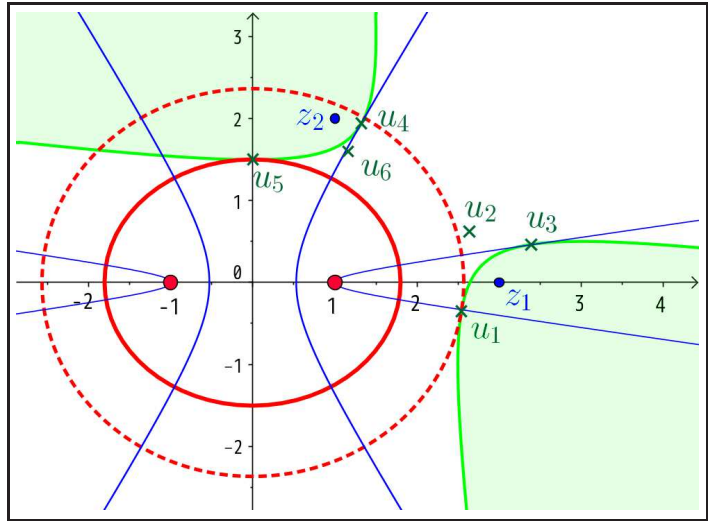}}}
  \caption{The leaning hyperbola indicates $C$.
           The two points $ u_1 $ and $ u_5 $ are tangent points of
           the dotted and thick ellipses with $ C $, respectively.
           The foci of these dotted and thick ellipses are both $ -1, 1 $.
           Moreover, the two points $ u_3 $ and $ u_4 $ are tangent points of
           $ C $ and the hyperbolas with foci $ -1,1 $.
          }\label{pic:3}
\end{figure}
\end{example}

\section{Triangular ratio metric on conic domains}\label{sect:smetric}

%\subsection{ The procedure for calculating s-metric}
\begin{mysubsection}{\bf The procedure for calculating the triangular ratio distance}

For two points $ z_1,z_2\in\mathbb{C} $ and the domain $ D $ whose boundary 
is given by a conic $ \Gamma $, the triangular ratio metric
\[ 
   s_D(z_1,z_2)=\sup_{z\in \Gamma}\frac{\vert z_1-z_2\vert }
                            {\vert z_1-z\vert +\vert z-z_2\vert }
\]
is obtained as follows.
\end{mysubsection}
\begin{enumerate}
  \item Let $ C:\ c(z)=\overline{a}z^2+pz\overline{z}+a\overline{z}^2
         +\overline{b}z+b\overline{z}+q=0 $ 
         be the conic given by $ A(\Gamma) $, where $ A $ is the
        similarity transformation
        $ A(z)=2/(z_1-z_2)z-(z_1+z_2)/(z_1-z_2) $.
  \item For $ C $, solve the equation $ F_4(z)=0 $.
  \item Find the points $ \zeta\in C $ for which the minimum
        $ \min_{F_4(z)=0}\big\{\vert z+1\vert +\vert z-1\vert \big\} $ 
        is attained.
  \item Then, we have
        $ s_C(1,-1)=2/(\vert 1-\zeta\vert +\vert \zeta+1\vert ) $.
  \item Because a similarity transformation preserves 
        the quotient of distances, we have
        \[
           s_D(z_1,z_2)=\sup_{z\in \Gamma}\frac{\vert z_1-z_2\vert }
                {\vert z_1-z\vert +\vert z-z_2\vert }
                     =s_C(1,-1).
        \]
\end{enumerate}

%\subsection{Examples of level sets}
\begin{mysubsection}{\bf Examples of level sets}

For a given domain $ G\subset\mathbb{C} $ and $ 0<t<1 $,
the set $ B_s(z,t)=\{\zeta\in G\,:\, s_G(z,\zeta)< t\} $,
is called the contour domain of the level $ t $.

\bigskip

Here, we will draw the level sets by solving the equation $F_4=0$. 
The used algorithm is the same as \cite[p.145 Algorithm]{fhmv}.
\end{mysubsection}

\begin{example} \

The left figure of Figure \ref{fig:contour1} indicates
the level sets $ B_s(0,t)=\{\zeta\in G\,:\, s_G(0,\zeta)<t\} $
for $ t=0.05, 0.1, \cdots,0.95, \mbox{and}\ 1 $ and the hyperbolic domain
$ G=\{\big\vert \vert z-(-1/2-1/2i)\vert 
-\vert z-(1-i)\vert \big\vert <4/5 \} $.
It seems that the edges of each contour curve are located on the set
\[ 
  \Big\{\big\vert \vert z-(-1/2-1/2i)\vert -\vert z-(1-i)\vert \big\vert 
  =\sqrt{1/2}\Big\}. 
\]

The right figure indicates
the level sets for the elliptic domain
$ G=\{\vert z-3/2\vert +\vert z-(-1/3-1/2i)\vert <11/5 \} $.
It seems that the edges of each contour curve are located on the set 
\[
   \Big\{\vert z-3/2\vert +\vert z-(-1/3-1/2i)\vert =(9+\sqrt{13})/{6}\Big\}.
\]

\begin{figure}
\centerline{\includegraphics[width=0.4\linewidth]{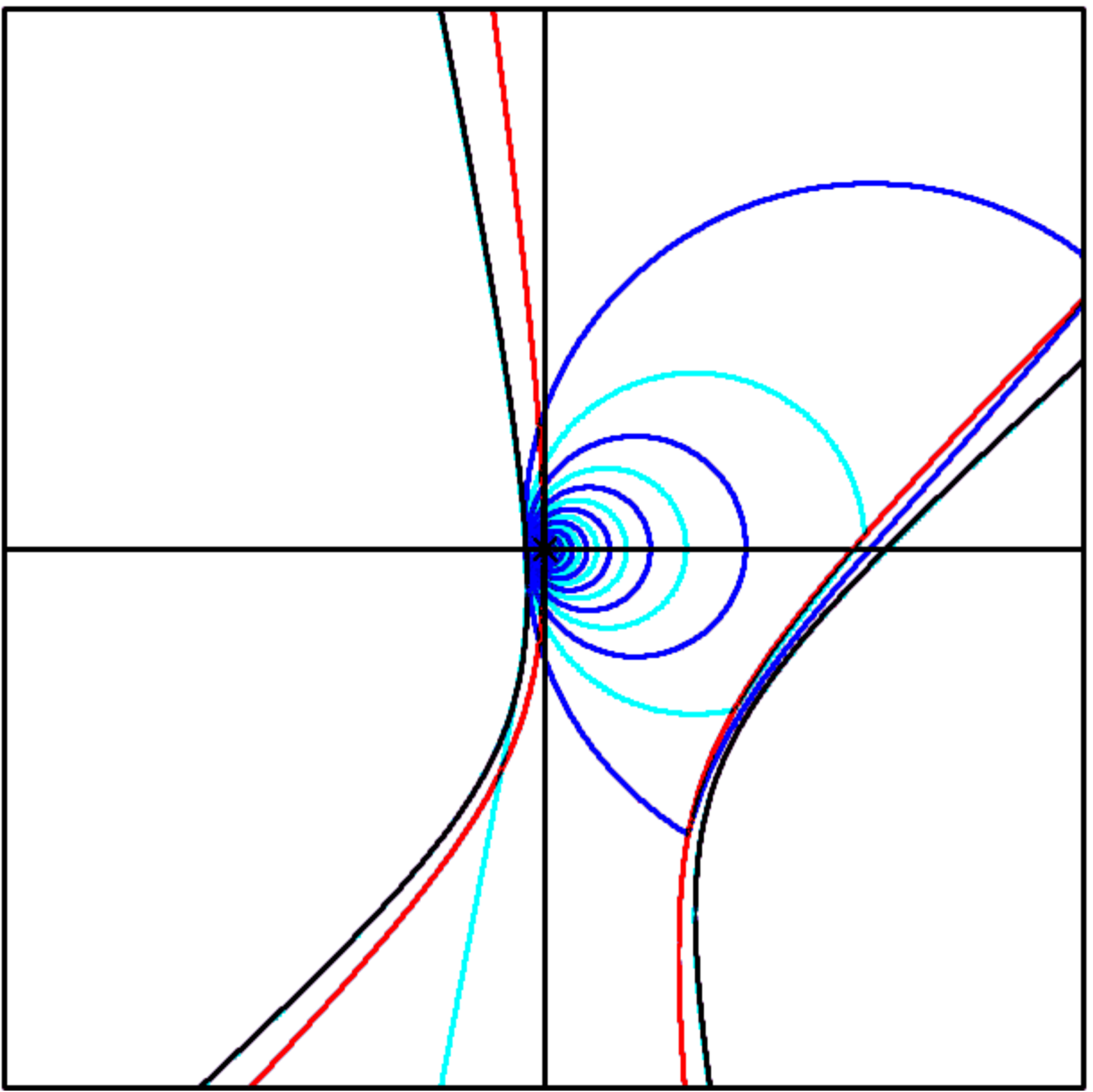}\qquad
            \includegraphics[width=0.4\linewidth]{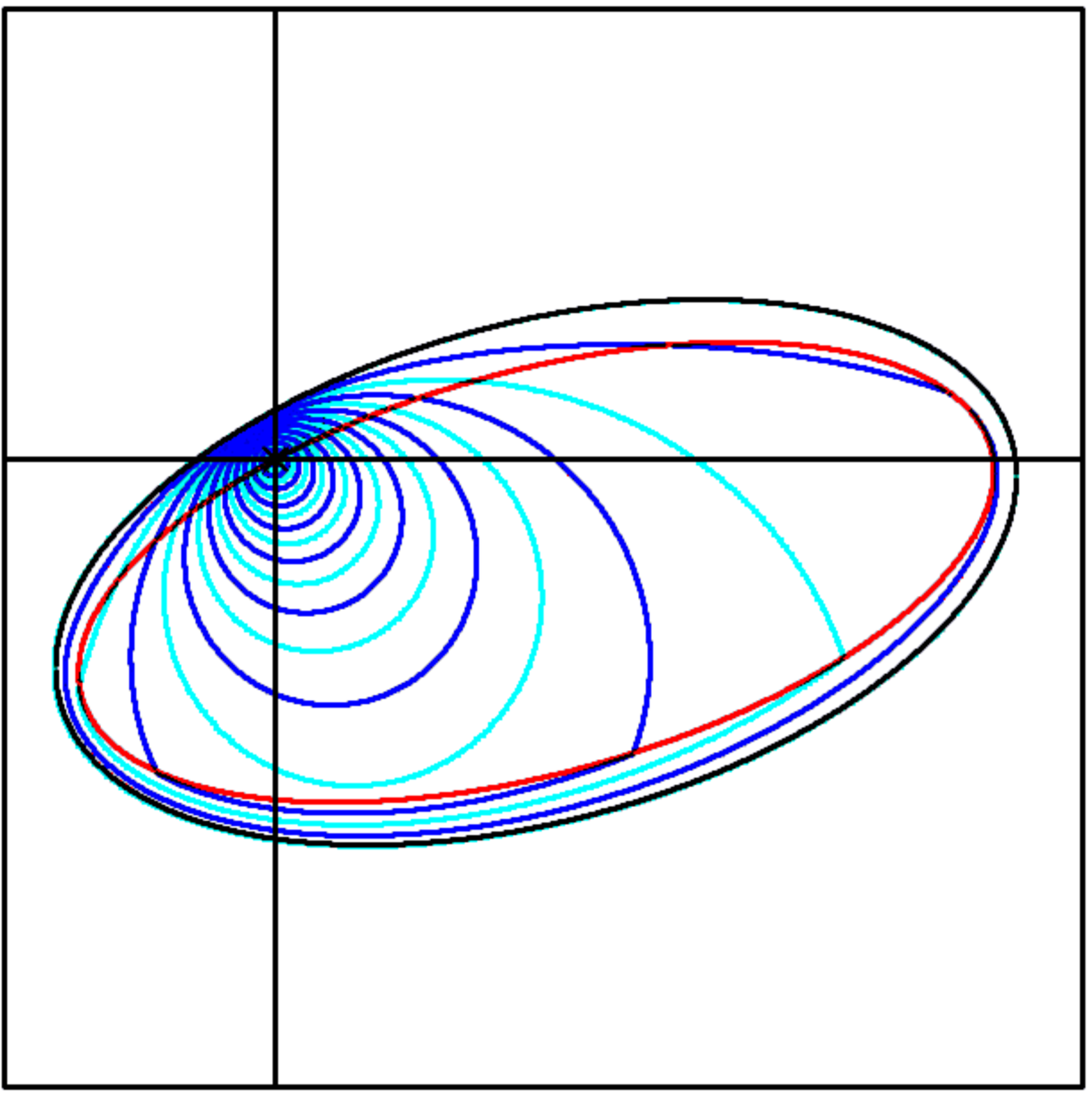}}
\caption{The level sets of 
         $ G=\{\big\vert \vert z-(-1/2-1/2i)\vert -\vert z
           -(1-i)\vert \big\vert <4/5 \} $ (left)
         and $ G=\{\vert z-3/2\vert +\vert z-(-1/3-1/2i)\vert <11/5 \} $
         (right).
         Note that each red curve passes through  
         all edge points of the contour curves.
        }
\label{fig:contour1}
\end{figure}
\end{example}

The above example leads to the following conjecture.

\begin{conjecture}
Let $ G $ be the domain defined by 
$ \{\vert z-f_1\vert +\vert z-f_2\vert <r\} $ or
$ \{\big\vert \vert z-f_1\vert -\vert z-f_2\vert \big\vert <r\} $, and
$ B_s(z,t)=\{\zeta\in G\,:\, s_G(z,\zeta)<t \}$.
Using a similarity transformation, we can set the center point $z $ to 0.
Then, the edge points of $ \partial B_s(0,t) $ are on the conic
$ \vert z-f_1\vert +\vert z-f_2\vert =\vert f_1\vert +\vert f_2\vert  $ or 
$ \big\vert \vert z-f_1\vert -\vert z-f_2\vert \big\vert 
  =\big\vert \vert f_1\vert -\vert f_2\vert \big\vert  $
if $ G = \{\vert z-f_1\vert +\vert z-f_2\vert <r\} $ or 
$ \{\big\vert \vert z-f_1\vert -\vert z-f_2\vert \big\vert <r\} $,
respectively.
\end{conjecture}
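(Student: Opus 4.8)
The plan is to recast the conjecture in terms of the reflection (PA-)points of Theorem \ref{lem:tan}. Writing $C=\partial G$ and measuring from the base point $0$, one has for $\zeta\in G$
\[
  s_G(0,\zeta)=\sup_{w\in C}\frac{|\zeta|}{|w|+|w-\zeta|}=\frac{|\zeta|}{m(\zeta)},\qquad m(\zeta):=\min_{w\in C}\big(|w|+|w-\zeta|\big),
\]
so that $\partial B_s(0,t)=\{\zeta:\,|\zeta|=t\,m(\zeta)\}$ and a boundary point $w$ realizing $m(\zeta)$ is a PA-point on $C$ for the pair $(0,\zeta)$ (the one achieving the minimal path length). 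By Theorem \ref{lem:tan}, applied after the similarity carrying $0,\zeta$ to $\pm1$, these minimizers are tangency points of the ellipse $|w|+|w-\zeta|=m(\zeta)$ with $C$, hence roots of the corresponding equation $F_4=0$.

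Next I would identify the edge points. Since $\zeta\in G$ avoids $0$ and $C$, each map $\zeta\mapsto|w|+|w-\zeta|$ is smooth, so $m$ is differentiable and $\partial B_s(0,t)$ is smooth wherever the minimizer $w^{*}(\zeta)$ is unique; a corner can therefore arise only where $m$ fails to be differentiable, that is, where $m(\zeta)$ is attained at two distinct boundary points $w_1\neq w_2$. Thus the edge set, taken over all levels $t$, is contained in the conflict locus $\Sigma=\{\zeta:\,m$ has at least two minimizers$\}$, and the conjecture reduces to the inclusion $\Sigma\subseteq\{|z-f_1|+|z-f_2|=|f_1|+|f_2|\}$ (respectively the confocal-hyperbola locus in the second case).

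The heart of the matter, and the step I expect to be the main obstacle, is to prove that $\zeta\in\Sigma$ forces $\zeta$ onto the confocal conic through $0$. The governing geometry is clear: at a minimizer $w^{*}$ the inner normal of $C$ bisects $\angle(0,w^{*},\zeta)$, while by the focal property of the ellipse $C$ the same normal bisects $\angle(f_1,w^{*},f_2)$, so reflection in this normal carries the direction $w^{*}\to 0$ to $w^{*}\to\zeta$ and the direction $w^{*}\to f_1$ to $w^{*}\to f_2$ at once. When $0$ and $\zeta$ both lie on one ellipse confocal with $C$, the confocal billiard/Graves reflection property of $C$ suggests that such a reflection point is always accompanied by a second one of equal total length, producing the tie $\zeta\in\Sigma$; the converse inclusion should follow by a uniqueness/degree count. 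The difficulty is that $C$ and the $(0,\zeta)$-ellipse are merely bitangent, not confocal, so the caustic argument has to be fused with the equal-length constraint $|w_1|+|w_1-\zeta|=|w_2|+|w_2-\zeta|$. A concrete alternative is computational, in the spirit of Theorem \ref{lem:tan}: with $0,\zeta$ as foci the minimizers are roots of $F_4$; impose that two roots yield the same value of $|w|+|w-\zeta|$ and eliminate them by a resultant. I expect the resulting condition on $\zeta$ to factor, with the geometrically meaningful factor being precisely $|\zeta-f_1|+|\zeta-f_2|-(|f_1|+|f_2|)$; separating this factor from the spurious ones (the analogues of the $F_1,F_2,F_3$ appearing there) is the delicate part.

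Finally I would check the normalizations: $0$ itself lies on the conjectured conic because $|0-f_1|+|0-f_2|=|f_1|+|f_2|$, and this confocal ellipse sits inside $G$, so the edges are genuinely realized in $G$; the hyperbolic domain is treated identically, replacing the sum of distances by the absolute difference, the tangent ellipse by the confocal hyperbola branch, and using the corresponding factor of $F_4$.
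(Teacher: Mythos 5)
You are attempting a statement that the paper itself does not prove: it is stated as a conjecture, supported only by the graphical level-set experiments of Figure \ref{fig:contour1}, so there is no paper argument to compare yours against; the only question is whether your proposal settles it, and by your own admission it does not. What is solid in your write-up is the reduction: the identity $s_G(0,\zeta)=\vert\zeta\vert/m(\zeta)$ with $m(\zeta)=\min_{w\in\partial G}\big(\vert w\vert+\vert w-\zeta\vert\big)$, and the Danskin-type observation that corners of $\partial B_s(0,t)$ can occur only on the conflict locus $\Sigma$ where the minimizer is non-unique. (To make even this step airtight you should add one line: where the minimizer $w^*$ is unique, $\nabla\big(\vert\zeta\vert-t\,m(\zeta)\big)$ is the difference of a unit vector and a vector of length $t<1$, hence nonzero, so the implicit function theorem gives local smoothness of the level curve; smoothness does not follow from differentiability of $m$ alone.)

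The genuine gap is the inclusion $\Sigma\subseteq\{\,\vert z-f_1\vert+\vert z-f_2\vert=\vert f_1\vert+\vert f_2\vert\,\}$ (and its hyperbolic analogue), which is exactly the content of the conjecture, and neither of your two routes establishes it. The geometric route fails for the reason you yourself identify: the Graves/confocal-billiard reflection invariance applies to conics confocal with $C$, whereas the ellipse $\vert w\vert+\vert w-\zeta\vert=m(\zeta)$ with foci $0,\zeta$ is merely bitangent to $C$; nothing in the argument forces a second equal-length reflection point when $\zeta$ lies on the confocal conic, nor excludes ties when it does not, so the claimed equivalence is heuristic. The computational route is a program rather than a proof: the resultant eliminating two equal-length roots of $F_4=0$ has not been computed, the factorization you "expect" (whose geometric factor would have to be the quadratic polynomial of the confocal ellipse, i.e.\ the rationalized form of $\vert\zeta-f_1\vert+\vert\zeta-f_2\vert=\vert f_1\vert+\vert f_2\vert$) is only conjectured, and even granting it you would still need to show that the equal-length pair consists of two \emph{global} minimizers, i.e.\ genuine PA-points, rather than spurious roots of $F_4=0$ — recall from Theorem \ref{lem:tan} and the examples following it that $e(z)=0$ also contains the confocal hyperbola branch, and only some roots of $F_4=0$ are actual tangency/reflection points. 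As it stands, your proposal is a correct and useful reformulation of the conjecture, but the conjecture remains open.
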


\section*{Acknowledgments}
This work was partially supported by JSPS KAKENHI Grant Number 19K03531 
and the Research Institute for Mathematical Sciences, 
an International Joint Usage/Research Center located in Kyoto University.

%%%%%%%%%%%%%%%%%%%%%%%%%%%%%%
%\bibliographystyle{tfnlm}
%\bibliography{PAref7}

\begin{thebibliography}{10}
\providecommand{\url}[1]{\normalfont{#1}}
\providecommand{\urlprefix}{Available from: }

\bibitem{gbl}
Gowers~T, Barrow-Green~J, Leader~I, editors. {The Princeton companion to
  mathematics}. Princeton University Press, Princeton, NJ; 2008. Xxii+1034 pp.
  ISBN: 978-0-691-11880-2.

\bibitem{mavreas}
Mavreas~K. {An inverse source problem in planetary sciences. Dipole
  localization in Moon rocks from sparse magnetic data} [dissertation].
  Astrophysics [astro-ph]. Universit\'e C\^ote d'Azur; 2020. NNT: 2020COAZ4007.

\bibitem{mibama}
Miller~WJ, Barnes~JW, MacKenzie~SM. {Solving the Alhazen--Ptolemy Problem:
  Determining Specular Points on Spherical Surfaces for Radiative Transfer of
  Titan's Seas}. The Planetary Science Journal. 2021;\hspace{0pt}63(2):7pp.

\bibitem{fhmv}
Fujimura~M, Hariri~P, Mocanu~M, {Vuorinen~M}. 
 {The Ptolemy-Alhazen problem and
  spherical mirror reflection}. Comput Methods Funct Theory.
  2019;\hspace{0pt}19:135--155.

\bibitem{fmv}
Fujimura~M, Mocanu~M, Vuorinen~M. {Barrlund's distance function and
  quasiconformal maps}. Complex Var Elliptic Equ.
  2021;\hspace{0pt}66(8):1225--1255.

\bibitem{elkin}
Elkin~JM. {A deceptively easy problem}. The Mathematics Teacher.
 1965;\hspace{0pt}58(3):194--199.

\bibitem{hkv-book}
{
  Hariri~P,  Kl\'en~R, Vuorinen~M.
  {Conformally {I}nvariant {M}etrics and {Q}uasiconformal {M}appings},
  Springer Monographs in Mathematics, Springer, Berlin, 2020.
  }

\bibitem{rainio}
Rainio~O. {Intrinsic metrics under conformal and quasiregular mappings} ; 2021.
  {arXiv:2103.04397 [math.MG]}.

\bibitem{chkv}
Chen~J, Hariri~P, Kl\'en~R, {Vuorinen~M}.
  {Lipschitz conditions, triangular ratio
  metric, and quasiconformal maps}. Ann Acad Sci Fenn.
  2015;\hspace{0pt}40:683--709. {doi:10.5186/aasfm.2015.4039}.

\bibitem{hvz}
Hariri~P, Vuorinen~M, Zhang~X. {Inequalities and bilipschitz conditions for
  triangular ratio metric}. Rocky Mountain J Math.
  2017;\hspace{0pt}47(4):1121--1148.

\bibitem{m}
  {
  Mocanu~M.
  {Functional Inequalities for Metric-Preserving Functions with
    Respect to Intrinsic Metrics of Hyperbolic Type}.
  Symmetry 2021, 13, 2072.% https:// doi.org/10.3390/sym13112072
  }
  
\bibitem{atr}
Taguchi~Y, Ramalingam~S, Agrawal~A. Beyond {A}lhazen's problem: Analytical
  projection model for non-central catadioptric cameras with quadric mirrors.
  In: 2013 IEEE Conference on Computer Vision and Pattern Recognition; jun; Los
  Alamitos, CA, USA. IEEE Computer Society; 2011. p. 2993--3000.
  \urlprefix\url{https://doi.ieeecomputersociety.org/10.1109/CVPR.2011.5995596}.

\bibitem{atr2}
Agrawal~A, Taguchi~Y, Ramalingam~S. {Analytical forward projection for axial
  non-central dioptric and catadioptric cameras}. In: K~Daniilidis~PM,
  Paragios~N, editors. {Computer Vision – ECCV 2010}; ({Lecture Notes in
  Computer Science}; Vol. 6313). {Springer, Berlin, Heidelberg}; 2010. p.
  129--143.

\bibitem{W}
Waldvogel~J. {The problem of the circular billiard}. Elem Math.
  1992;\hspace{0pt}47:108--113.

\bibitem{DG}
Drexler~M, Gander~MJ. {Circular billiard (English summary)}. SIAM Rev.
  1998;\hspace{0pt}40(2):315--323.

\bibitem{rees}
  Rees~EL. {Graphical discussion of the roots of a quartic
    {equation}}. Amer Math
  Monthly. 1922;\hspace{0pt}29:51--55.

\bibitem{J}
Janson~S. {Invariants of polynomials and binary forms} ; 2011. {arXiv:1102.3568
  [math.HO], http://arxiv.org/abs/1102.3568}.

\bibitem{Bromwich}
Bromwich~TJI. {The caustic, by reflection, of a circle}. Amer J Math.
  1904;\hspace{0pt}26(1):33--44.

\bibitem{B}
Barrlund~A. {The p-relative distance is a metric}. {SIAM J Matrix Anal Appl}.
  1999;\hspace{0pt}21(2):699--702. (electronic), DOI 10.1137/S0895479898340883.

\bibitem{bla-circum}
Fujimura~M. {Blaschke products and circumscribed conics}. Comput Methods Funct
  Theory. 2017;\hspace{0pt}17:635--652.

\bibitem{cox2}
Cox~DA, Little~J, O'Shea~D. {Using algebraic geometry, 2nd edition}.
  {Springer}; 2004. Graduate texts in Math. 185.

\end{thebibliography}
%\bibliographystyle{siam}

\end{document}